\numberwithin{equation}{section} 
\title{Well-conditioned boundary integral equation formulations and Nystr\"om discretizations for the solution of Helmholtz problems with impedance boundary conditions in two-dimensional Lipschitz domains}
\author{ Catalin Turc\thanks{  Department of
Mathematical Sciences and Center for Applied Mathematics and Statistics, New Jersey  Institute of Technology,
Univ. Heights. 323 Dr. M. L. King Jr. Blvd, Newark, NJ 07102, USA, e-mail: catalin.c.turc@njit.edu.}, Yassine Boubendir\thanks{  Department of
Mathematical Sciences and Center for Applied Mathematics and Statistics, New Jersey  Institute of Technology,
Univ. Heights. 323 Dr. M. L. King Jr. Blvd, Newark, NJ 07102, USA, e-mail: boubendi@njit.edu.}, Mohamed Kamel Riahi\thanks{  Department of
Mathematical Sciences and Center for Applied Mathematics and Statistics, New Jersey  Institute of Technology,
Univ. Heights. 323 Dr. M. L. King Jr. Blvd, Newark, NJ 07102, USA, e-mail: riahi@njit.edu.}}
\newtheorem{theorem}{Theorem}[section]
\newtheorem{remark}[theorem]{Remark}
\newenvironment{proof}{\hspace{0.5cm} {\bf Proof.}}
{$\quad {}_\blacksquare$\vspace{0.3cm}}
\date{}
\newcommand{\triple}[1]{{\left\vert\kern-0.25ex\left\vert\kern-0.25ex\left\vert #1 
    \right\vert\kern-0.25ex\right\vert\kern-0.25ex\right\vert}}
\begin{document}
\maketitle
\begin{abstract}
  We present a regularization strategy that leads to well-conditioned boundary integral equation formulations of Helmholtz equations with impedance boundary conditions in two-dimensional Lipschitz domains. We consider both the case of classical impedance boundary conditions, as well as the case of transmission impedance conditions wherein the impedances are certain coercive operators. The latter type of problems is instrumental in the speed up of the convergence of Domain Decomposition Methods for Helmholtz problems. Our regularized formulations use as unknowns the Dirichlet traces of the solution on the boundary of the domain. Taking advantage of the increased regularity of the unknowns in our formulations, we show through a variety of numerical results that a graded-mesh based Nystr\"om discretization of these regularized formulations leads to efficient and accurate solutions of interior and exterior Helmholtz problems with impedance boundary conditions.   
 \newline \indent
  \textbf{Keywords}: impedance boundary value problems, 
  integral equations, Lipschitz domains, regularizing
  operators, Nystr\"om method, graded meshes.\\
   
 \textbf{AMS subject classifications}: 
 65N38, 35J05, 65T40,65F08
\end{abstract}

\section{Introduction\label{intro}}

\parskip 2pt plus2pt minus1pt

The computation of accurate solutions of Helmholtz problems with impedance boundary conditions is relevant to a wide variety of applications, including antennas and stealth technology. Another important area where numerical solutions of impedance boundary value problems are extremely relevant is that of Domain Decomposition Methods (DDM) for the solution of Helmholtz equations. Indeed, in the aforementioned context DDM rely on impedance matching boundary conditions between subdomain solutions~\cite{Depres}. In order to accelerate the convergence of DDM for Helmholtz equations, impedance (Robin) transmission conditions can be used to great effect~\cite{boubendirDDM,Steinbach} on the interfaces between subdomains. In these cases the impedance (which is typically a piecewise constant function) on the interface between two subdomains  is replaced by certain coercive operators that are approximations to Dirichlet to Neumann operators corresponding to those subdomains~\cite{boubendirDDM}.

Whenever applicable, boundary integral solvers for solution of Helmholtz impedance boundary value problems are computationally advantageous~\cite{Langdon,Bernard,Perrey-Debain}. Although both interior and exterior Helmholtz impedance boundary value problems remain well-posed for all real values of the frequency, robust boundary integral formulations of these problems still have to rely on the Combined Field approach~\cite{KressColton}. The classical Combined Field formulations feature the Helmholtz hypersingular boundary integral operator, and as such are not integral equations of the second kind. We present in this paper regularized combined field integral equations of the second kind for Helmholtz impedance boundary value problems in two dimensional Lipschitz domains. Our regularization strategy was previously applied successfully to Neumann boundary conditions~\cite{turc_corner_N,turc1,br-turc}.  Our approach covers both the cases of piecewise constant impedance, as well as the transmission impedance operators of importance to DDM. The unknowns in our regularized formulations are Dirichlet traces of solutions on the boundary, which enjoy optimal regularity properties amongst solutions of possible boundary integral formulations of Helmholtz impedance problems in Lipschitz domains. 

We take advantage of the increased regularity of the solutions of our regularized formulations (the solutions are H\"older continuous) to construct high-order Nystr\"om discretizations based on graded meshes, trigonometric interpolation, singular kernel-splitting, and analytic evaluations of integrals that involve products of certain singular functions and Fourier harmonics~\cite{kusmaul,martensen}. Our Nystr\"om method incorporates sigmoid transforms~\cite{KressCorner} within parametrizations of domains with corners and it uses the Jacobians of these transformations as multiplicative weights to define new unknowns. A weighted Dirichlet trace defined as the product of the derivatives of the sigmoid parametrizations and the usual Dirichlet trace of solution of impedance problems is introduced as a new unknown. Given that the derivatives of the parametrizations that incorporate sigmoid transforms vanish polynomially at corners, the weighted traces are more regular for large enough values of the order of the polynomial in the sigmoid transform. Introducing new weighted unknowns also require definition of new weighted boundary integral equations that involve weighted versions of the four scattering boundary integral operators. The weighted formulations turn out to be particularly useful in the case of piecewise constant (discontinuous) impedances. We use splitting of the kernels of the four Helmholtz boundary integral operators required in the Calder\'on calculus into regular components and explicit singular components that have been presented in our previous efforts~\cite{turc_corner_N,dominguez2015well}. An appealing aspect of our regularized formulations is exploiting Calder\'on's identities to bypass evaluations of hypersingular operators, which facilitate the kernel splitting techniques. We give ample numerical evidence that our Nystr\"om solvers for impedance boundary value problems converge with high-order and are well-conditioned throughout the frequency spectrum.

The paper is organized as follows: in Section~\ref{cfie} we formulate the Helmholtz impedance boundary value problems we are interested in; in Section~\ref{di_ind_cfie} we discuss several regularized boundary integral formulations of the Helmholtz impedance boundary value problems and we establish the well-posedness of these regularized formulations; in Section~\ref{transmission} we investigate regularized boundary integral formulations for transmission impedance boundary value problems in connection with Domain Decomposition Methods; finally, in Section~\ref{singular_int} we present high-order Nystr\"om discretizations of the various boundary integral equations considered in this paper.

\section{Integral Equations of Helmholtz impedance boundary value problems\label{cfie}}

We consider the problem of evaluating time-harmonic fields that satisfy impedance boundary conditions on the boundary $\Gamma$ of a scatterer $D_2$ which occupies a bounded region in $\mathbb{R}^2$. Denoting by $D_1=\mathbb{R}^2\setminus {\overline{D_2}}$, we are interesting in solving
\begin{equation} 
  \label{eq:Ac_i}
\begin{aligned}
  \Delta u^j+k^2 u^j&=&0,\qquad &\mathrm{in}\ D^j,\ j=1,2\\
\gamma_N^j u^j+ Z^j \gamma_D^j u^j&=&f^j,\qquad &\mathrm{on}\ \Gamma,\ j=1,2,
\end{aligned}
\end{equation}
where the wavenumber $k$ is assumed to be positive, $f^j$ are data defined on the curve $\Gamma$, and $Z^j\in\mathbb{C}$ such that $\Im{Z^1}>0$ and $\pm \Im{Z^2}> 0$. In equations~\eqref{eq:Ac_i} and what follows $\gamma_D^j,j=1,2$ denote exterior and respectively interior Dirichlet traces, whereas $\gamma_N^j,j=1,2$ denote exterior and respectively interior Neumann traces taken with respect to the exterior unit normal on $\Gamma$. We assume in what follows that the boundary $\Gamma$ is a closed Lipschitz curve in $\mathbb{R}^2$.  

For any $D\subset\mathbb{R}^2$ domain with bounded Lipschitz boundary $\Gamma$,  we denote by $H^s(D)$ the classical Sobolev space of order $s$ on $D$ 
(see for example~\cite[Ch. 3]{mclean:2000} or~\cite[Ch. 2]{adams:2003}). 
We consider in addition the Sobolev spaces defined on the boundary $\Gamma$,  $H^s(\Gamma)$, which are well defined for any $s\in[-1,1]$. We recall that for any $s>t$, $H^s(\Sigma)\subset H^t(\Sigma)$, $\Sigma\in\{D_1,D_2,\Gamma\}$ and the embeddings are compact. Moreover, 
$\big(H^t(\Gamma)\big)'=H^{-t}(\Gamma)$ when the inner product of $H^0(\Gamma)=L^2(\Gamma)$ is used as duality product. If $\Gamma_0\subset\Gamma$ such that $meas(\Gamma_0)>0$ (we mean here the one dimensional measure), we can still define Sobolev spaces of functions/distributions on $\Gamma_0$. Indeed, for $0<s\leq 1/2$ we define by ${H}^s(\Gamma_0)$ be the space of distributions that are restrictions to $\Gamma_0$ of functions in $H^s(\Gamma)$. The space $\widetilde{H}^s(\Gamma_0)$ is defined as the closed subspace of $H^s(\Gamma_0)$
\[
\widetilde{H}^s(\Gamma_0)=\{u\in H^s(\Gamma_0):\widetilde{u}\in H^s(\Gamma)\},\ 0<s\leq 1/2
\]
where
\[\widetilde{u}:=\begin{cases}
 u, & {\rm on}\  \Gamma \\
 0,  & {\rm on}\ \Gamma\setminus\Gamma_0.
\end{cases}
\]
We define then $H^t(\Gamma_0)$ to be the dual of $\widetilde{H}^{-t}(\Gamma_0)$ for $-1/2\leq t<0$, and $\widetilde{H}^t(\Gamma_0)$ the dual of $H^{-t}(\Gamma_0)$ for $-1/2\leq t<0$.

It is well known that $\gamma_D^j: H^{s+1/2}(D_j)\to H^s(\Gamma)$ is continuous for $s\in(0,1)$ and if
\[
 H^s_\Delta(D_j):=\left\{U\in H^{s}(D_j)\ :\ \Delta U\in L^2(D_j) \right\},
\]
endowed with its natural norm, 
then $\gamma_N:  H^s_\Delta(D_j)\to H^{s-3/2}(\Gamma)$ is continuous  for  $s\in(1/2,3/2)$. The space $H^1(\Gamma)$, and its dual $H^{-1}(\Gamma)$, are then the limit case from several different perspectives. 

If we furthermore require that $u^1$ satisfies Sommerfeld radiation conditions at infinity:
\begin{equation}\label{eq:radiation}
\lim_{|r|\to\infty}r^{1/2}(\partial u^1/\partial r - iku^1)=0,
\end{equation}
then the assumptions $\Im{Z^1}>0$ and $\pm\Im{Z^2}>0$ guarantee that equations~\eqref{eq:Ac_i} have unique solutions $u^1\in C^2(D_1)\cap H^1_{\rm loc}(D_1)$ and $u^2\in C^2(D_2)\cap H^1(D_2)$ for data $f^j\in H^{-1/2}(\Gamma)$~\cite{mclean:2000}. The unique solvability results remain valid in the cases when $Z^1\in L^\infty(\Gamma),\ \Im{Z^1}>0$ and $Z^2\in L^\infty(\Gamma),\ \pm\Im(Z^2)>0$~\cite{mclean:2000}.

We note that in many applications of interest the data $f^1$ is related to an incident field $u^{inc}$ that satisfies 
\begin{equation}
  \label{eq:Maxwell_inc}
  \Delta u^{inc}+k^2 u^{inc}=0 \qquad \mathrm{in}\ \overline{D}_1,
\end{equation}
by the relation
\begin{equation}\label{eq:rhs}
f^1 = -\gamma_N^1 u^{inc} - Z^1 \gamma_D^1 u^{inc},
\end{equation}
in which case the solution $u^1$ of equations~\eqref{eq:Ac_i} is a scattered field.

\section{Regularized boundary integral formulations for the solution of Helmholtz impedance boundary value problems\label{di_ind_cfie}}

We present next regularized direct boundary integral formulations for the solution of impedance boundary value problems that are similar in spirit to those introduced in~\cite{turc1,turc_corner_N} in the case of Neumann boundary conditions. To this end, we begin by reviewing the definition and mapping properties of the four scattering boundary integral operators related to the Helmholtz operator $\Delta +k^2$. 

\subsection{Layer potentials and operators}
We start with the definition of the single and double layer potentials. Given a wavenumber $k$ such that $\Re{k}>0$ and $\Im{k}\geq 0$, and a density $\varphi$ defined on $\Gamma$, we define the single layer potential as
$$[SL_k(\varphi)](\mathbf{z}):=\int_\Gamma G_k(\mathbf{z}-\mathbf{y})\varphi(\mathbf{y})ds(\mathbf{y}),\ \mathbf{z}\in\mathbb{R}^2\setminus\Gamma$$
and the double layer potential as
$$[DL_k(\varphi)](\mathbf{z}):=\int_\Gamma \frac{\partial G_k(\mathbf{z}-\mathbf{y})}{\partial\mathbf{n}(\mathbf{y})}\varphi(\mathbf{y})ds(\mathbf{y}),\ \mathbf{z}\in\mathbb{R}^2\setminus\Gamma$$
where $G_k(\mathbf{x})=\frac{i}{4}H_0^{(1)}(k|\mathbf{x}|)$ represents the two-dimensional outgoing Green's function of the Helmholtz equation with wavenumber $k$. The Dirichlet and Neumann exterior and interior traces on $\Gamma$ of the single and double layer potentials corresponding to the wavenumber $k$ and a density $\varphi$ are given by
\begin{eqnarray}\label{traces}
\gamma_D^1 SL_k(\varphi)&=&\gamma_D^2 SL_k(\varphi)=S_k\varphi \nonumber\\
\gamma_N^j SL_k(\varphi)&=&(-1)^j\frac{\varphi}{2}+K_k^\top \varphi\quad j=1,2\nonumber\\
\gamma_D^j DL_k(\varphi)&=&(-1)^{j+1}\frac{\varphi}{2}+K_k\varphi\quad j=1,2\nonumber\\
\gamma_N^1 DL_k(\varphi)&=&\gamma_N^2 DL_k(\varphi)=N_k\varphi.
\end{eqnarray}
In equations~\eqref{traces} the operators $K_k$ and $K^\top_k$, usually
referred to as double and adjoint double layer operators, are defined for a given wavenumber $k$ and density $\varphi$ as
\begin{equation}
\label{eq:double}
(K_k\varphi)(\mathbf x):=\int_{\Gamma}\frac{\partial G_k(\mathbf x-\mathbf y)}{\partial\mathbf{n}(\mathbf y)}\varphi(\mathbf y)ds(\mathbf y),\ \mathbf x\ \in \Gamma
\end{equation}
and 
\begin{equation}
\label{eq:adj_double}
(K_k^\top\varphi)(\mathbf x):=\int_{\Gamma}\frac{\partial G_k(\mathbf x-\mathbf y)}{\partial\mathbf{n}(\mathbf x)}\varphi(\mathbf y)ds(\mathbf y),\ \mathbf x \in \Gamma.
\end{equation}
Furthermore, for a given wavenumber $k$ and density $\varphi$, 
the operator $N_k$ denotes the Neumann trace of the double layer potential on 
$\Gamma$ given in terms of a Hadamard Finite Part (FP)  integral which can be re-expressed in terms of a Cauchy Principal Value (PV) integral that involves the tangential derivative $\partial_s$ on the curve $\Gamma$
\begin{eqnarray}
 \label{eq:normal_double} 
(N_k \varphi)(\mathbf x) &:=& \text{FP} \int_\Gamma \frac{\partial^{2}G_k(\mathbf x -\mathbf y)}{\partial \mathbf{n}(\mathbf x) \partial \mathbf{n}(\mathbf y)} \varphi(\mathbf y)ds(\mathbf y)\nonumber\\
&=&k^{2}\int_\Gamma G_k(\mathbf x -\mathbf y)
(\mathbf{n}(\mathbf x)\cdot\mathbf{n}(\mathbf y))\varphi(\mathbf y)ds(\mathbf y)+ {\rm PV}
\int_\Gamma \partial_s G_k(\mathbf x -\mathbf y)\partial_s \varphi(\mathbf y)ds(\mathbf y).
\end{eqnarray}
Finally, the single layer operator $S_k$ is defined for a wavenumber $k$ as
\begin{equation}\label{eq:sl}
(S_k\varphi)(\mathbf x):=\int_\Gamma G_k(\mathbf x -\mathbf y)\varphi(\mathbf y)ds(\mathbf y),\ \mathbf{x}\ \in \Gamma
\end{equation} 
for a density function $\varphi$ defined on $\Gamma$. 

 Green identities can be now written in the simple form:
\[
 u^j=(-1)^j SL_k(\gamma_N^j u^j)-(-1)^j DL_k(\gamma_D^j u^j). 
\]
Similarly, 
\begin{equation}\label{eq:C1}
 C_j=\tfrac12\begin{bmatrix}I\\ &I\end{bmatrix}+(-1)^j\begin{bmatrix}
                           -K_k & S_k\\
                           -N_k& K_k^\top
                          \end{bmatrix}, \quad 
                          j=1,2
\end{equation}
are the Calder\'{o}n exterior/interior projections associated to the exterior/interior Helmholtz equation: 
\begin{equation}\label{eq:C2}
 C_j^2=C_j,\quad C_j\begin{bmatrix}
                     \gamma_D^j u^j\\
                     \gamma_N^j u^j
                    \end{bmatrix}=
\begin{bmatrix}
                     \gamma_D^j u^j\\
                     \gamma_N^j u^j
                    \end{bmatrix}.
\end{equation}
We recall that from \eqref{eq:C1}-\eqref{eq:C2} one deduces easily 
\begin{equation}\label{eq:calderon}
 S_kN_k=-\frac{1}{4}I + K_k^2,\quad 
 N_kS_k=-\frac{1}{4}I + (K_k^\top)^2,\quad 
 N_k K_k= K_k^\top N_k.
\end{equation}

We recount next several important results related to the mapping properties of the four boundary integral operators of the Calder\'{o}n calculus~\cite{dominguez2015well}.

\begin{theorem}\label{mapping}
  Let  $D_2$ be a bounded domain, with Lipschitz boundary $\Gamma$. The following mappings
\begin{itemize}
\item $S_k:H^{s}(\Gamma)\to H^{s+1}(\Gamma)$
\item $K_k:H^{s+1}(\Gamma)\to H^{s+1}(\Gamma)$
\item $K^\top_k:H^{s}(\Gamma)\to H^{s}(\Gamma)$
\item $N_k:H^{s+1}(\Gamma)\to H^{s}(\Gamma)$
\end{itemize}
are continuous for $s\in[-1,0]$. Furthermore, if $k_1\ne k_2$ we have that  
\begin{itemize}
\item $S_{k_1}-S_{k_2}:H^{-1}(\Gamma)\to H^{1}(\Gamma)$
\item $K_{k_1}-K_{k_2}:H^{0}(\Gamma)\to H^{1}(\Gamma)$
\item $K^\top_{k_1}-K^\top_{k_2}:H^{-1}(\Gamma)\to H^{0}(\Gamma)$
\item $N_{k_1}-N_{k_2}:H^{0}(\Gamma)\to H^{0}(\Gamma)$.
\end{itemize}
are continuous and compact. 
\end{theorem}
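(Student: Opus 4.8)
The plan is to establish the continuity statements by a combination of classical potential-theoretic results and the explicit kernel-splitting that underlies the Nyström discretization developed later in the paper. For the first set of mappings, the cleanest route is to reduce everything to the Laplace case $k=0$ (where $G_0(\mathbf{x}) = -\frac{1}{2\pi}\log|\mathbf{x}|$) plus a smoothing remainder. Concretely, I would write $G_k = G_0 + (G_k - G_0)$ and observe that the difference $G_k - G_0$ is continuous across the diagonal with only a $\log$-type singularity in its derivatives (indeed $H_0^{(1)}(t) - \frac{2i}{\pi}\log t$ extends to an even analytic function of $t^2$), so the operators built from $G_k - G_0$ map into arbitrarily smooth spaces on a $C^\infty$ portion of $\Gamma$ and, on a Lipschitz curve, gain at least the same regularity as claimed. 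The mapping properties for the Laplace single and double layer operators on Lipschitz curves are classical (Verchota, Costabel): $S_0 : H^{-1/2}(\Gamma) \to H^{1/2}(\Gamma)$ is an isomorphism and, more generally, $S_0 : H^s(\Gamma) \to H^{s+1}(\Gamma)$, $K_0, K_0^\top : H^s(\Gamma)\to H^s(\Gamma)$, $N_0 : H^{s+1}(\Gamma) \to H^s(\Gamma)$ are bounded on the range $s\in[-1,0]$ for which the boundary Sobolev scale is well defined on a Lipschitz curve. One must be slightly careful that $K_0$ only maps $H^{s+1}(\Gamma)\to H^{s+1}(\Gamma)$ rather than gaining a derivative; this is a genuine feature of Lipschitz (non-smooth) boundaries and is exactly why the statement is phrased with the shifted indices $s+1$ for $K_k$ and $N_k$.

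Second, for the compactness of the differences $S_{k_1}-S_{k_2}$, $K_{k_1}-K_{k_2}$, $K_{k_1}^\top-K_{k_2}^\top$, $N_{k_1}-N_{k_2}$, I would exploit that the kernel difference is far more regular than either kernel separately. Writing $H_0^{(1)}(z) = \frac{2i}{\pi}J_0(z)\log(z/2) + (\text{even entire in }z)$ and using $J_0(z) = 1 + O(z^2)$, one sees that for $k_1 \ne k_2$ the difference $G_{k_1}(\mathbf{x}) - G_{k_2}(\mathbf{x})$ has a kernel of the form $a(\mathbf{x}) \log|\mathbf{x}| + b(\mathbf{x})$ with $a$ vanishing at $\mathbf{x}=0$ (since the leading $\log$ coefficients cancel: both equal $\frac{i}{4}\cdot\frac{2i}{\pi} = -\frac{1}{2\pi}$), hence the difference kernel is actually bounded, and its tangential derivatives pick up only an integrable $\log$ singularity rather than the pseudo-differential-order-zero Cauchy kernel present in $K_k$ or $\partial_s G_k$ individually. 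Consequently each difference operator is, up to the regularity limitations imposed by the Lipschitz curve, one order smoothing better than the statement of the continuity part — it maps $H^{-1}(\Gamma)$ (resp.\ $H^0(\Gamma)$) into $H^1(\Gamma)$ — and then compactness follows immediately from the compact embedding $H^1(\Gamma) \hookrightarrow H^t(\Gamma)$ for $t<1$ stated in Section~\ref{cfie}, combined with the fact that the relevant target space in each bullet ($H^1$, $H^1$, $H^0$, $H^0$) embeds $H^1$ compactly into the space appearing in the corresponding continuity bullet. For $N_{k_1}-N_{k_2}$ one uses the identity~\eqref{eq:normal_double}: the $k^2$-term difference is a single-layer-type operator against the smooth factor $\mathbf{n}(\mathbf{x})\cdot\mathbf{n}(\mathbf{y})$, which is clearly smoothing, and the principal-value term becomes $\mathrm{PV}\int_\Gamma \partial_s(G_{k_1}-G_{k_2})(\mathbf{x}-\mathbf{y})\,\partial_s\varphi(\mathbf{y})\,ds(\mathbf{y})$, whose kernel $\partial_s(G_{k_1}-G_{k_2})$ is now only $\log$-singular rather than Cauchy-singular, so this operator gains a full derivative relative to $N_k$ itself and maps $H^0(\Gamma)\to H^1(\Gamma)$.

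The main obstacle is the low regularity of $\Gamma$: on a general Lipschitz curve the boundary Sobolev scale $H^s(\Gamma)$ is only meaningful for $|s|\le 1$, the normal vector $\mathbf{n}$ is merely $L^\infty$, and one cannot appeal to the smoothness of the parametrization to upgrade regularity indefinitely. I would therefore have to be careful to invoke the correct Lipschitz-domain versions of the mapping properties — the Rellich-identity / Verchota results on $L^2$ and $H^{1/2}$ boundedness and invertibility of the Laplace operators, together with the interpolation and duality arguments that extend these to the full range $s\in[-1,0]$ — rather than the stronger $C^\infty$-boundary statements. I would also note that all four difference operators' improved mapping properties are ultimately controlled by the single scalar fact that the coefficient of the logarithmic singularity in $G_k$ is independent of $k$, which is what produces the cancellation; once that observation is made, the compactness in each case is routine via the compact Sobolev embeddings already recorded in the paper. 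The remaining bookkeeping — verifying that the kernel-splitting of $G_k$ into its explicit $\log$-singular part and a $C^\infty$ remainder is compatible with the Lipschitz geometry, which is in any case carried out in detail for the Nyström method in Section~\ref{singular_int} — I would reference rather than reproduce.
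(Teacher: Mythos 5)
The paper does not actually prove Theorem~\ref{mapping}: it is ``recounted'' from the reference \cite{dominguez2015well}, so there is no in-paper argument to compare against. That said, your proposal is essentially the standard proof found in that line of work: split $G_k=G_0+(G_k-G_0)$, invoke the Verchota/Costabel Lipschitz-domain results (Rellich identities, $L^2$ and $H^1$ boundedness of the Laplace layer operators, duality and interpolation to cover $s\in[-1,0]$), and observe that the coefficient of the logarithmic singularity of $G_k$ is $-\tfrac{1}{2\pi}$ independently of $k$, so that $G_{k_1}-G_{k_2}=O(|\mathbf{x}|^2\log|\mathbf{x}|)$ and the difference operators gain a derivative. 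Your treatment of $N_{k_1}-N_{k_2}$ via the Maue-type identity~\eqref{eq:normal_double} is also the right mechanism, since the hypersingular part is converted into a tangential-derivative sandwich around a kernel whose difference is only log-singular.

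One genuine soft spot: for the first two difference bullets the asserted \emph{compactness} is as maps \emph{into} $H^1(\Gamma)$, and on a Lipschitz curve $H^1(\Gamma)$ is the top of the boundary Sobolev scale, so your argument ``continuous into $H^1$, then compose with the compact embedding $H^1\hookrightarrow H^t$, $t<1$'' only yields compactness into the strictly weaker spaces, not into $H^1$ itself. To get compactness into $H^1(\Gamma)$ one needs a direct argument, e.g.\ noting that the kernel of $S_{k_1}-S_{k_2}$ and its first gradient are continuous (with at worst logarithmic second derivatives), so the operator and its tangential derivative are limits in operator norm of degenerate-kernel operators. This gap is harmless for the paper's purposes --- every invocation of Theorem~\ref{mapping} in the proofs of Theorems~\ref{thm1}--\ref{thm4} uses only continuity into $H^1(\Gamma)$ followed by the compact embedding into $H^0(\Gamma)$, which your argument does establish --- but if you want the theorem exactly as stated you should supply that extra step (or weaken ``compact'' to ``compact after composition with the embedding into any $H^t$, $t<1$'').
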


We also recount a result due to Escauriaza, Fabes and Verchota~\cite{EsFaVer:1992}. In this result, $K_0$, $K_0^\top$ are the double and adjoint double layer operator for Laplace equation (which obviously correspond to $k=0$).

\begin{theorem}\label{theo:inv}
For any Lipschitz curve $\Gamma$ and  $\lambda\not\in [-1/2,1/2)$, the mappings
\[
 \lambda I+K_0 :H^s(\Gamma)\to H^s(\Gamma)
\]
are invertible for $s\in[-1,1]$. Furthermore, the mappings
\[
\frac{1}{2}I\pm K_0:H^s(\Gamma)\to H^s(\Gamma)
\]
are Fredholm of index 0 for $s\in[-1,1]$.
\end{theorem}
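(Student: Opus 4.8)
The content at $s=0$ drives everything: once the claim is known on $L^2(\Gamma)$ it propagates to all $s\in[-1,1]$ by a regularity ($H^1$) estimate, duality, and interpolation. So the plan is to prove first that for real $\lambda$ with $\lambda\notin[-1/2,1/2)$ the operators $\lambda I+K_0$ and $\lambda I+K_0^\top$ are invertible on $L^2(\Gamma)$ and that $\tfrac12 I\pm K_0$, $\tfrac12 I\pm K_0^\top$ are Fredholm of index $0$ there, and then to bootstrap in $s$. Throughout, $SL_0$ and $S_0$ denote the $k=0$ (harmonic) single layer potential and operator, $\partial_\nu^{1},\partial_\nu^{2}$ the exterior/interior normal derivatives on $\Gamma$, and $\partial_T$ the tangential derivative.

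The $L^2$ analysis rests on two pillars. The analytic pillar is the $L^2(\Gamma)$-boundedness of the Cauchy integral on a Lipschitz curve (Coifman--McIntosh--Meyer): for $\varphi\in L^2(\Gamma)$ the harmonic function $u:=SL_0\varphi$ has the nontangential maximal function of $\nabla u$ in $L^2(\Gamma)$, with $\gamma_D^1u=\gamma_D^2u=S_0\varphi$, one-sided normal derivatives equal to $\tfrac12 I\pm K_0^\top$ applied to $\varphi$, jump $\partial_\nu^2u-\partial_\nu^1u=\varphi$, and matching tangential derivatives $\partial_Tu^1=\partial_Tu^2=\partial_T(S_0\varphi)$. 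The geometric pillar is that a Lipschitz $\Gamma$ carries a smooth vector field $h$ with $h\cdot\nu\ge c_0>0$ a.e.\ on $\Gamma$; feeding $u$ into the Rellich--Pohozaev identity obtained from $\operatorname{div}\big(h\,|\nabla u|^2-2(h\cdot\nabla u)\nabla u\big)$ on $D_2$ and on $D_1\cap B_R$ (then $R\to\infty$), and using $|\nabla u|^2=|\partial_\nu u|^2+|\partial_Tu|^2$ on $\Gamma$ together with the matching of tangential derivatives and the jump relation, produces
\begin{equation}\label{rellich}
 \|\varphi\|_{L^2(\Gamma)}\ \le\ C\Big(\big\|(\pm\tfrac12 I+K_0^\top)\varphi\big\|_{L^2(\Gamma)}+\|\varphi\|_{H^{-1/2}(\Gamma)}\Big),
\end{equation}
with either sign, and the same with $K_0^\top$ replaced by $K_0$ by taking $L^2$-transposes. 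Since $L^2(\Gamma)\hookrightarrow H^{-1/2}(\Gamma)$ compactly, \eqref{rellich} says each of $\tfrac12 I\pm K_0$, $\tfrac12 I\pm K_0^\top$ is bounded below modulo compact, hence has closed range and finite-dimensional kernel; applying the estimate also to the transpose makes each Fredholm, and the index is $0$ by deforming $\lambda$: along each of the half-lines $\{\lambda\ge 1/2\}$ and $\{\lambda\le -1/2\}$ one has (by the variational bound of the next paragraph) a norm-continuous Fredholm family that is invertible for $|\lambda|$ large.

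To upgrade Fredholmness to invertibility when $\lambda\notin[-1/2,1/2)$, it remains to kill kernels. If $(\lambda I+K_0^\top)\varphi=0$ then $u=SL_0\varphi$ solves the homogeneous transmission problem $\operatorname{div}(a\nabla u)=0$ in $\mathbb R^2$ with $a\equiv 1$ in $D_1$ and $a\equiv\mu(\lambda)$ in $D_2$, where $\mu(\lambda)=\tfrac{\lambda+1/2}{\lambda-1/2}$ is a Möbius image of $\lambda$ that is positive precisely for $\lambda\notin[-1/2,1/2)$ and degenerates as $\lambda\to-1/2$. When $\mu(\lambda)>0$ the energy identity $\int a|\nabla u|^2=0$ is coercive and forces $\nabla u\equiv0$, so $u$ is constant on each side and $\varphi=\partial_\nu^2u-\partial_\nu^1u=0$; moreover the same coercivity gives the quantitative lower bound $\|\varphi\|_{L^2}\le C\|(\lambda I+K_0)\varphi\|_{L^2}$ used in the deformation above. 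The one place this breaks is the excluded endpoint $\lambda=-1/2$: the boundary term at infinity in the energy identity need not vanish there because in two dimensions $SL_0\varphi$ grows like $(\log|\mathbf z|)\int_\Gamma\varphi$, and indeed $-\tfrac12 I+K_0$ then acquires a one-dimensional kernel (spanned by a constant; for $-\tfrac12 I+K_0^\top$, by the equilibrium density $\psi_e$ normalized by $S_0\psi_e\equiv\mathrm{const}$), so these two operators are merely Fredholm of index $0$. Together with the previous paragraph this establishes the $L^2(\Gamma)$ statement.

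Finally, I would pass to general $s\in[-1,1]$. All four operators act boundedly on $H^s(\Gamma)$, $s\in[-1,1]$, by the $k=0$ instance of Theorem~\ref{mapping} together with the fact that $K_0:H^1(\Gamma)\to H^1(\Gamma)$. The regularity ($H^1$) version of the above lower bounds follows from the Verchota-type estimates for the $L^2$-regularity problem on Lipschitz domains (concretely, the tangential-derivative commutator $\partial_T(K_0\varphi)=K_0^\top(\partial_T\varphi)+R\varphi$ with $R$ lower order turns the $L^2$ bound for $\lambda I+K_0^\top$ into an $H^1$ bound for $\lambda I+K_0$), the $H^{-1}(\Gamma)$ version follows by the $H^1$--$H^{-1}$ duality pairing, and interpolation fills in $s\in(-1,1)$; the (fixed, finite-dimensional) kernels and cokernels are carried along, giving invertibility of $\lambda I+K_0$ for $\lambda\notin[-1/2,1/2)$ and Fredholmness of index $0$ of $\tfrac12 I\pm K_0$ on all $H^s(\Gamma)$, $s\in[-1,1]$. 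The step I expect to be the real obstacle is \eqref{rellich}: on a merely Lipschitz $\Gamma$ it demands approximating $\Gamma$ by smooth curves with \emph{uniform} $L^2$ control of the nontangential maximal function of $\nabla SL_0\varphi$ (exactly where Coifman--McIntosh--Meyer is indispensable), careful accounting so the error terms land in a space compactly embedded in $L^2(\Gamma)$, and --- peculiar to two dimensions --- tracking the logarithmic growth of the single layer, which is precisely what defeats the argument at $\lambda=-1/2$.
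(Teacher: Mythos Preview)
The paper does not prove this theorem at all: it is simply ``recounted'' from Escauriaza--Fabes--Verchota~\cite{EsFaVer:1992} and then used as a black box in the Fredholm arguments of Theorems~\ref{thm1}--\ref{thm4}. There is therefore no proof in the paper to compare your attempt against.

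That said, your outline is precisely the route taken in the original literature (Verchota's 1984 paper for the endpoint operators, and \cite{EsFaVer:1992} for general $\lambda$): Coifman--McIntosh--Meyer to get $L^2$-boundedness and nontangential control of $\nabla SL_0\varphi$, Rellich--Ne\v{c}as identities to obtain the closed-range estimate~\eqref{rellich}, the transmission-problem reading $(\lambda I+K_0^\top)\varphi=0 \Leftrightarrow \operatorname{div}(a\nabla u)=0$ with $a=\mu(\lambda)$ inside to kill the kernel when $\mu(\lambda)>0$, and finally regularity/duality/interpolation to cover $s\in[-1,1]$. So as a reconstruction of the cited result your plan is sound.

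One sign issue worth flagging. With the paper's conventions (exterior normal, jump relation $\gamma_D^2 DL_0\varphi=-\tfrac12\varphi+K_0\varphi$) one has $DL_0(1)\equiv -1$ in $D_2$, hence $K_0 1=-\tfrac12$. Thus constants lie in $\ker(\tfrac12 I+K_0)$, not in $\ker(-\tfrac12 I+K_0)$ as you wrote; the endpoint at which invertibility fails is $\lambda=\tfrac12$, and the interior-Dirichlet operator $-\tfrac12 I+K_0$ is actually invertible. (This is consistent with the $2$D logarithmic obstruction you identified --- it just attaches to the other endpoint --- and it suggests the exclusion set in the theorem statement should read $(-\tfrac12,\tfrac12]$ rather than $[-\tfrac12,\tfrac12)$.) Your argument is unaffected once you swap the roles of the two endpoints.
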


\subsection{Regularized boundary integral equation formulations of Helmholtz impedance boundary value problems}

We start with the case of exterior scattering problems with impedance boundary conditions given by~\eqref{eq:rhs} and we derive {\em direct} regularized boundary integral equations formulations of these problems. Assuming smooth incident fields $u^{inc}$ in $\mathbb{R}^2$, an application of the second Green identities for the functions $u^{inc}$ and $G_k(\mathbf{x}-\cdot),\ \mathbf{x}\in D_1$ in the domain $D_2$ leads to
\[
0=-SL_k(\gamma_N^1 u^{inc})+DL_k(\gamma_D^1 u^{inc})\qquad {\rm in}\qquad D_1
\]
and hence
\[
u^1 = -SL_k[\gamma_N^1(u^1 +u^{inc})] +DL_k[\gamma_D^1(u^1 +u^{inc})]\qquad {\rm in}\qquad D_1.
\]
We define the {\em physical} unknown that is the Dirichlet trace of the total field on $\Gamma$
\begin{equation}\label{unknown1}
\gamma_D^1 u:=\gamma_D^1(u^1 +u^{inc})
\end{equation}
and take into account the impedance boundary conditions to get the representation formula
\begin{equation}\label{eq:repr1}
u^1 = SL_k(Z^1 \gamma_D^1u) + DL_k(\gamma_D^1 u).
\end{equation}
Applying the exterior Dirichlet and Neumann traces to equation~\eqref{eq:repr1} we obtain
\begin{eqnarray}\label{eq:traces1}
\frac{\gamma_D^1 u}{2} - K_k(\gamma_D^1 u) - S_k(Z^1\gamma_D^1u)&=&\gamma_D^1 u^{inc}\nonumber\\
\frac{Z^1\gamma_D^1 u}{2}+N_k(\gamma_D^1 u)+K_k^\top(Z^1\gamma_D^1 u)&=&-\gamma_N^1 u^{inc}.\nonumber
\end{eqnarray}
Following the strategy introduced in~\cite{turc_corner_N} we add the first equation above to the second equation above composed on the left with the operator $-2S_{\kappa},\ \Im{\kappa}>0$ and we obtain a Regularized Combined Field Integral Equation (CFIER) of the form
\begin{eqnarray}\label{eq:CFIER1}
\mathcal{A}_{k,\kappa}^1 \gamma_D^1 u&=&\gamma_D^1 u^{inc}+2S_\kappa\gamma_N^1 u^{inc}\nonumber\\
\mathcal{A}_{k,\kappa}^1&:=&\frac{1}{2}I-2S_\kappa N_k-S_\kappa Z^1-2S_\kappa K_k^\top Z^1-K_k-S_k Z^1. 
\end{eqnarray} 
\begin{remark}
For the time being we view $Z^1$ as the multiplicative operator by the complex constant $Z^1$. The notation in equation~\eqref{eq:CFIER1} allows us to consider more general operators $Z^1$.
\end{remark}
Similar considerations lead us to regularized boundary integral equation formulations of interior Helmholtz impedance boundary value problems. Indeed, the physical unknown $\gamma_D^2 u^2$ satisfies
\begin{eqnarray}\label{eq:CFIER2}
\mathcal{A}_{k,\kappa}^2 \gamma_D^2 u^2&=&(S_k+S_\kappa-2S_\kappa K_k^\top)f^2\nonumber\\
\mathcal{A}_{k,\kappa}^2&:=&\frac{1}{2}I-2S_\kappa N_k+S_\kappa Z^2-2S_\kappa K_k^\top Z^2+K_k+S_k Z^2. 
\end{eqnarray} 
We will establish the well-posedness of the CFIER formulations in appropriate Sobolev spaces. Although for the time being we assume that $Z^j,\ j=1,2$ are complex constants, the derivations we present next remain valid for the cases when $Z^j,\ j=1,2$ are functions defined on $\Gamma$. We note that in the case $Z^j\in L^\infty(\Gamma),\ j=1,2$, we have that $\gamma_D^j u^j\in H^{1/2}(\Gamma)$~\cite{mclean:2000} and hence $Z^j\gamma_D^j u^j\in L^2(\Gamma)$. Assuming impedance boundary data $f^j\in L^2(\Gamma)$, it follows that $\gamma_N^j u^j\in L^2(\Gamma)$, which in turn imply $\gamma_D^j u^j\in H^1(\Gamma)$. In the light of this discussion, we will establish the well-posedness of the CFIER equations~\eqref{eq:CFIER1} and~\eqref{eq:CFIER2} respectively in a wide range of Sobolev spaces.

\subsection{Well-posedness of the CFIER formulations~\eqref{eq:CFIER1} and~\eqref{eq:CFIER2}}

We make use of the classical results recounted in Theorem~\ref{mapping} and Theorem~\ref{theo:inv} to establish the following result:
\begin{theorem}\label{thm1}
Assume that $Z^1\in\mathbb{C}$ such that $\Im{Z^1}>0$. The operators $\mathcal{A}_{k,\kappa}^1$ defined in equations~\eqref{eq:CFIER1} are invertible with continuous inverses in the spaces $H^s(\Gamma)$ for all $s\in[-1,1]$.
\end{theorem}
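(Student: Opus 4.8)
The plan is to establish invertibility of $\mathcal{A}_{k,\kappa}^1$ via the Fredholm alternative: first show that $\mathcal{A}_{k,\kappa}^1$ is a compact perturbation of an invertible operator on $H^s(\Gamma)$, and then show injectivity by relating the kernel to the (uniquely solvable) exterior impedance boundary value problem. For the first part, I would isolate the ``principal part'' of $\mathcal{A}_{k,\kappa}^1 = \tfrac12 I - 2S_\kappa N_k - S_\kappa Z^1 - 2S_\kappa K_k^\top Z^1 - K_k - S_k Z^1$. Using the Calder\'on identity $S_\kappa N_\kappa = -\tfrac14 I + K_\kappa^2$ from \eqref{eq:calderon}, I rewrite $-2S_\kappa N_k = -2S_\kappa N_\kappa - 2S_\kappa(N_k - N_\kappa) = \tfrac12 I - 2K_\kappa^2 - 2S_\kappa(N_k-N_\kappa)$, so that
\[
\mathcal{A}_{k,\kappa}^1 = I - 2K_\kappa^2 - 2S_\kappa(N_k - N_\kappa) - S_\kappa Z^1 - 2S_\kappa K_k^\top Z^1 - K_k - S_k Z^1.
\]
By Theorem~\ref{mapping}, on $H^s(\Gamma)$ with $s\in[-1,0]$ (and by duality/interpolation up to $s=1$), $S_\kappa$ gains one derivative, $N_k-N_\kappa$ is smoothing, $K_k$ is compact on $H^s$ for the relevant range (being a compact perturbation of $K_\kappa$, or directly because the difference $K_0 - K_k$ is compact and $K_0$ maps into a better space is not quite enough — I would instead argue the terms $S_\kappa Z^1$, $S_\kappa K_k^\top Z^1$, $S_k Z^1$ are all compact since $S_\kappa, S_k$ are compact as operators $H^s\to H^s$ when composed after bounded operators, because they actually map $H^s\to H^{s+1}\hookrightarrow\hookrightarrow H^s$). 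The term $2K_\kappa^2$ is the one that is \emph{not} compact, so the ``principal part'' is $I - 2K_\kappa^2 - K_k$. Here I would use that $K_k - K_\kappa$ is compact (Theorem~\ref{mapping}) to reduce to $I - 2K_\kappa^2 - K_\kappa = (I + \text{something})$... actually the cleanest route: write $\mathcal{A}_{k,\kappa}^1 = (\tfrac12 I - K_\kappa) + (\tfrac12 I + ...) $ — more precisely I would try to factor the non-compact part as a product of operators of the form $\mu I + K_0$ with $\mu\notin[-\tfrac12,\tfrac12)$, to invoke Theorem~\ref{theo:inv}.

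The key realization is that, modulo compact operators, $K_k \equiv K_0$ and $K_\kappa \equiv K_0$ on the relevant Sobolev scale (differences of layer operators at distinct wavenumbers are compact, and $k, 0$ can be connected the same way since the excerpt's Theorem~\ref{mapping} is stated for generic distinct wavenumbers and $K_0$ is admissible). Thus the non-compact part of $\mathcal{A}_{k,\kappa}^1$ is, modulo compacts, $I - 2K_0^2 - K_0 = -(2K_0 - ... )$. I would factor $I - 2K_0^2 - K_0$. Setting it equal to $-(2K_0^2 + K_0 - I) = -(2K_0 - 1)(K_0 + 1) = -2(K_0 - \tfrac12 I)(K_0 + I)$. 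Since $-\tfrac12\notin[-\tfrac12,\tfrac12)$ is false — $-\tfrac12$ IS in $[-\tfrac12,\tfrac12)$ — so $K_0 - \tfrac12 I$ is only Fredholm of index $0$ by Theorem~\ref{theo:inv}, not invertible; however $K_0 + I = I + K_0$ with $1\notin[-\tfrac12,\tfrac12)$ is invertible. So the principal part is a product of an invertible operator and a Fredholm-index-zero operator, hence Fredholm of index zero on $H^s(\Gamma)$, $s\in[-1,1]$. Therefore $\mathcal{A}_{k,\kappa}^1$ is Fredholm of index $0$ on each $H^s(\Gamma)$, $s\in[-1,1]$, and it suffices to prove injectivity.

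For injectivity: suppose $\mathcal{A}_{k,\kappa}^1 \psi = 0$ with $\psi\in H^s(\Gamma)$. A regularity bootstrap (using that $\psi = 2K_\kappa^2\psi + \text{smoother terms}$ and the index-zero structure, or more simply that any solution of the homogeneous CFIER lies in $H^{1}(\Gamma)$ since $S_\kappa$ maps into $H^{s+1}$) lets me assume $\psi$ is as regular as needed, in particular $\psi\in H^{1/2}(\Gamma)$ and $Z^1\psi\in H^{-1/2}(\Gamma)$. Define $v := SL_k(Z^1\psi) + DL_k(\psi)$ in $D_1$; this is a radiating Helmholtz solution. From the homogeneous equation and the recombination structure (the first trace equation plus $-2S_\kappa$ times the second), I want to conclude that both $\gamma_D^1 v = 0$ and $\gamma_N^1 v + Z^1\gamma_D^1 v = 0$ — the cleanest way is to observe that the CFIER operator was obtained as $(\text{eq.\ I}) + (-2S_\kappa)(\text{eq.\ II})$ where eq.\ I reads $\gamma_D^1 v = \gamma_D^1 v$ (tautology for the potential $v$ via the jump relations applied to $SL_k(Z^1\psi)+DL_k(\psi)$) — rather, I apply exterior traces to $v$: $\gamma_D^1 v - S_k(Z^1\psi) + (\text{jump}) ... $. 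Concretely, the exterior Dirichlet trace of $v$ equals $-\tfrac12\psi + K_k\psi$... wait, signs depend on orientation; I will track them carefully. The upshot: $\mathcal{A}_{k,\kappa}^1\psi = \gamma_D^1 v + 2S_\kappa(\gamma_N^1 v + Z^1\gamma_D^1 v)$ up to the identity contributions, so $\mathcal{A}_{k,\kappa}^1\psi=0$ gives $\gamma_D^1 v = -2S_\kappa(\gamma_N^1 v + Z^1\gamma_D^1 v)$. Taking the interior Dirichlet trace and using Calder\'on relations, or rather: since $v$ solves the homogeneous exterior impedance problem up to the coupling, I show $\gamma_N^1 v + Z^1\gamma_D^1 v = 0$ by applying $N_k$-type manipulations; then the exterior impedance problem's uniqueness (guaranteed by $\Im Z^1 > 0$, as recalled in Section~\ref{cfie}) forces $v\equiv 0$ in $D_1$, whence $\gamma_D^1 v = \gamma_N^1 v = 0$. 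The jump relations for $SL_k, DL_k$ across $\Gamma$ then give the interior traces of $v$: $[\gamma_D v] = \psi$ and $[\gamma_N v] = Z^1\psi$, so the interior field $v|_{D_2}$ has $\gamma_D^2 v = -\psi$ and $\gamma_N^2 v = -Z^1\psi$... I must check the sign of $\Im(-Z^1)$: it is $<0$, which is precisely the admissible range $\pm\Im Z^2 > 0$ for the interior problem, so interior uniqueness also applies and $v\equiv 0$ in $D_2$, forcing $\psi = [\gamma_D v] = 0$.

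The main obstacle I anticipate is the injectivity argument — specifically, carefully verifying that the homogeneous CFIER really does imply that the associated single/double-layer potential satisfies \emph{both} a homogeneous exterior impedance condition \emph{and}, via the jump relations, a homogeneous interior impedance condition with a sign on $Z^1$ that still lies in an admissible uniqueness range. The choice $\Im\kappa > 0$ is essential because $S_\kappa$ must be injective (the exterior Dirichlet problem at $i\kappa$-like wavenumber has no resonances), so that $2S_\kappa(\gamma_N^1 v + Z^1 \gamma_D^1 v) = 0$ genuinely yields $\gamma_N^1 v + Z^1\gamma_D^1 v = 0$ rather than merely that this combination lies in $\ker S_\kappa$. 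Tracking all trace signs through the jump relations \eqref{traces} in the Lipschitz (low-regularity) setting, and justifying the regularity bootstrap that puts $\psi$ into $H^{1/2}(\Gamma)$ so the potential-theoretic arguments apply, are the delicate points; everything else is bookkeeping with Theorems~\ref{mapping}--\ref{theo:inv}.
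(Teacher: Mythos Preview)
Your Fredholm argument is correct and essentially identical to the paper's: both reduce the principal part, modulo compacts, to $I-K_0-2K_0^2=2\bigl(\tfrac12 I-K_0\bigr)(I+K_0)$ and invoke Theorem~\ref{theo:inv}.

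The injectivity argument, however, has a genuine gap. With $v:=SL_k(Z^1\psi)+DL_k\psi$ and the trace formulas~\eqref{traces}, one computes
\[
\psi-\gamma_D^1 v=\tfrac12\psi-K_k\psi-S_kZ^1\psi,\qquad
\gamma_N^1 v+Z^1\psi=\tfrac12 Z^1\psi+K_k^\top Z^1\psi+N_k\psi,
\]
so that $\mathcal{A}_{k,\kappa}^1\psi=(\psi-\gamma_D^1 v)-2S_\kappa(\gamma_N^1 v+Z^1\psi)$. Setting this to zero yields a \emph{single} relation between $\psi-\gamma_D^1 v$ and $\gamma_N^1 v+Z^1\psi$; it does not give $\gamma_N^1 v+Z^1\gamma_D^1 v=0$, and injectivity of $S_\kappa$ is of no help because you never obtain $S_\kappa(\cdot)=0$. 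Your exterior-first-then-interior route therefore stalls at the first step.

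The paper sidesteps this by proving injectivity of the \emph{transpose} $(\mathcal{A}_{k,\kappa}^1)^\top$. For $\varphi\in\ker\bigl((\mathcal{A}_{k,\kappa}^1)^\top\bigr)$ one sets $v:=SL_k\varphi+DL_k[2S_\kappa\varphi]$; with this \emph{indirect} ansatz the homogeneous transpose equation is exactly $\gamma_N^1 v+Z^1\gamma_D^1 v=0$, so exterior impedance uniqueness gives $v\equiv0$ in $D_1$. The jump relations then yield $\gamma_D^2 v=-2S_\kappa\varphi$, $\gamma_N^2 v=\varphi$, and the interior step is finished not by another impedance uniqueness but by Green's identity in $D_2$ together with the coercivity $\Im\int_\Gamma(S_\kappa\varphi)\overline{\varphi}\,ds>0$ for $\Im\kappa>0$, forcing $\varphi=0$.

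If you prefer to work directly with $\mathcal{A}_{k,\kappa}^1$, observe that the identities above also read $\psi-\gamma_D^1 v=-\gamma_D^2 v$ and $\gamma_N^1 v+Z^1\psi=\gamma_N^2 v$, so the homogeneous CFIER is equivalently $\gamma_D^2 v+2S_\kappa\gamma_N^2 v=0$. One can then run the argument in the \emph{reverse} order: Green's formula in $D_2$ plus the $S_\kappa$-coercivity give $\gamma_N^2 v=0$ (hence $\gamma_D^2 v=0$); the jump relations then deliver $\gamma_D^1 v=\psi$, $\gamma_N^1 v=-Z^1\psi$, i.e.\ the exterior impedance condition holds, and exterior uniqueness forces $\psi=0$. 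This is a legitimate alternative, but it is not the route you sketched.
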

\begin{proof}
We establish first that the operators $\mathcal{A}_{k,\kappa}^1$  are Fredholm of index 0 in $H^0(\Gamma)$. Using Calder\'on's identities we can recast $\mathcal{A}_{k,\kappa}^1$ into the following form
\begin{eqnarray}
\mathcal{A}_{k,\kappa}^1&=&(I-K_0-2K_0^2) + \mathcal{A}_0^1=2\left(\frac{1}{2}I-K_0\right)\left(I+K_0\right)+ \mathcal{A}_0^1\nonumber\\
\mathcal{A}_0^1&=&2S_\kappa(N_\kappa-N_k)-2(K_\kappa-K_0)K_\kappa-2K_0(K_\kappa-K_0)-S_\kappa Z^1-2S_\kappa K_k^\top Z^1\nonumber\\
& +&(K_0-K_k)-S_k Z^1.\nonumber
\end{eqnarray}
It follows from the results in Theorem~\ref{mapping} that $\mathcal{A}_0^1:H^0(\Gamma)\to H^1(\Gamma)$ continuously, and thus $\mathcal{A}_0^1:H^0(\Gamma)\to H^0(\Gamma)$ is compact. Also, the operator 
\[
2\left(\frac{1}{2}I-K_0\right)\left(I+K_0\right)
\]
is Fredholm of index 0 in $H^0(\Gamma)$ since (a) the operator $\frac{1}{2}I-K_0$ is Fredholm of index 0 in $H^0(\Gamma)$, (b) the operator $I+K_0$ is invertible in $H^0(\Gamma)$, and (c) the two operators commute. We thus conclude that the operator $\mathcal{A}_{k,\kappa}^1$ is a compact perturbation of a Fredholm operator of index 0 in the space $H^0(\Gamma)$, and hence the operator $\mathcal{A}_{k,\kappa}^1$ is itself a Fredholm operator of index 0 in the same space.

Given the Fredholm property of the operator $\mathcal{A}_{k,\kappa}^1$, its invertibility is equivalent to its injectivity. We show in turn that the transpose of this operator with respect to the real scalar product in $H^0(\Gamma)$ is injective. The latter can be seen to equal
\[
(\mathcal{A}_{k,\kappa}^1)^\top=\frac{1}{2}I-2N_k S_\kappa-Z^1 S_\kappa- 2 Z^1 K_k S_\kappa-K_k^\top-Z^1 S_k.
\]
Let $\varphi\in Ker((\mathcal{A}_{k,\kappa}^1)^\top)$ and let us define
\[
v:=SL_k\varphi+DL_k[2S_\kappa]\varphi,\qquad \mathrm{in}\quad \mathbb{R}^2\setminus\Gamma.
\]
We have that
\begin{eqnarray}
\gamma_D^1 v &=& S_\kappa\varphi + 2K_k S_\kappa \varphi+S_k \varphi\nonumber\\
\gamma_N^1 v &=&-\frac{1}{2}\varphi + K_k^\top\varphi + 2N_k S_\kappa\varphi\nonumber
\end{eqnarray}
and hence
\[
\gamma_N^1 v + Z^1 \gamma_D^1 v = 0
\]
if we take into account that $\varphi\in Ker((\mathcal{A}_{k,\kappa}^1)^\top)$. Now $v$ is a radiative solution of Helmholtz equation in $D_1$ satisfying the impedance boundary condition $\gamma_N^1 v + Z^1 \gamma_D^1 v = 0$. Under the assumption that $\Im{Z^1}>0$ it follows that $v$ is identically zero in $D_1$, and hence 
\[
\gamma_D^1 v = 0\qquad \gamma_N^1 v =0.
\]
The last relation immediately implies
\[
\gamma_D^2 v = -2S_\kappa\varphi\qquad \gamma_N^2 v = \varphi.
\]
Using Green's formulas we obtain that
\[
\int_{D_2}(|\nabla v|^2 -k |v|^2)dx=-2\int_\Gamma (S_\kappa\varphi)\ \overline{\varphi}\ ds.
\]
Using the fact that~\cite{turc2} 
\[
\Im \int_\Gamma (S_\kappa\varphi)\ \overline{\varphi}\ ds >0,\quad \varphi\neq 0
\]
when $\Im{\kappa}>0$ we obtain that $\varphi=0$. Consequently, the operator $(\mathcal{A}_{k,\kappa}^1)^\top$ is injective, and thus the operator $\mathcal{A}_{k,\kappa}^1$ is injective as well, which completes the proof of the Theorem in the space $H^0(\Gamma)$. Clearly, the arguments of the proof can be repeated verbatim in the Sobolev spaces $H^s(\Gamma)$ for all $s\in[-1,0)$. The result in the remaining Sobolev spaces $H^s(\Gamma),\ s\in(0,1]$ follows then from duality arguments.
\end{proof}

\begin{theorem}\label{thm2}
Assume that $Z^2\in\mathbb{C}$ such that $\pm\Im{Z^2}>0$. The operators $\mathcal{A}_{k,\kappa}^2$ defined in equations~\eqref{eq:CFIER2} are invertible with continuous inverses in the spaces $H^s(\Gamma)$ for all $s\in[-1,1]$.
\end{theorem}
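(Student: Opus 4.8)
The plan is to mirror the proof of Theorem~\ref{thm1} almost verbatim, making the small sign adjustments dictated by the interior (rather than exterior) nature of the problem and by the sign of $\Im Z^2$. First I would recast $\mathcal{A}_{k,\kappa}^2$ using the Calder\'on identities in~\eqref{eq:calderon} and the splitting $S_\kappa N_k = S_\kappa N_\kappa + S_\kappa(N_k-N_\kappa)$, $K_k = K_0 + (K_k-K_0)$, etc., exactly as was done for $\mathcal{A}_{k,\kappa}^1$, so as to write
\begin{equation}\label{eq:split2}
\mathcal{A}_{k,\kappa}^2 = 2\left(\tfrac{1}{2}I+K_0\right)\left(I-K_0\right) + \mathcal{A}_0^2,
\end{equation}
where $\mathcal{A}_0^2$ collects the differences of operators at distinct wavenumbers together with the $Z^2$-terms, and hence $\mathcal{A}_0^2 : H^0(\Gamma)\to H^1(\Gamma)$ continuously by Theorem~\ref{mapping}; the compact embedding $H^1(\Gamma)\hookrightarrow H^0(\Gamma)$ then makes $\mathcal{A}_0^2$ compact on $H^0(\Gamma)$. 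Note that the sign flip in the Green identity for the interior problem is what turns the factor $2(\tfrac12 I - K_0)(I+K_0)$ appearing for the exterior case into $2(\tfrac12 I + K_0)(I - K_0)$ here. By Theorem~\ref{theo:inv}, $\tfrac12 I + K_0$ is Fredholm of index $0$ and $I - K_0 = -(-I+K_0)$ is invertible (taking $\lambda=-1\notin[-1/2,1/2)$) on $H^0(\Gamma)$, and the two factors commute, so the leading term in~\eqref{eq:split2} is Fredholm of index $0$; adding the compact $\mathcal{A}_0^2$ keeps $\mathcal{A}_{k,\kappa}^2$ Fredholm of index $0$ on $H^0(\Gamma)$.

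Given the Fredholm property, invertibility reduces to injectivity, and I would again pass to the real transpose,
\begin{equation}\label{eq:transpose2}
(\mathcal{A}_{k,\kappa}^2)^\top = \tfrac{1}{2}I - 2N_k S_\kappa + Z^2 S_\kappa - 2 Z^2 K_k S_\kappa + K_k^\top + Z^2 S_k,
\end{equation}
and take $\varphi\in\mathrm{Ker}\big((\mathcal{A}_{k,\kappa}^2)^\top\big)$. Setting $v := SL_k\varphi - DL_k[2S_\kappa]\varphi$ in $\mathbb{R}^2\setminus\Gamma$ (the sign on the double-layer term chosen to match the interior trace relations), the jump relations~\eqref{traces} give expressions for $\gamma_D^2 v$ and $\gamma_N^2 v$, and the kernel condition forces $\gamma_N^2 v + Z^2 \gamma_D^2 v = 0$ on $\Gamma$. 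Thus $v$ solves the interior Helmholtz equation in $D_2$ with an impedance condition whose coefficient satisfies $\pm\Im Z^2>0$; by the uniqueness statement recalled in Section~\ref{cfie} this yields $v\equiv 0$ in $D_2$, hence $\gamma_D^2 v = \gamma_N^2 v = 0$, and therefore from the exterior traces of the layer potentials $\gamma_D^1 v = 2S_\kappa\varphi$, $\gamma_N^1 v = \varphi$ (up to the sign bookkeeping). Applying Green's identity in the exterior $D_1$ together with the radiation condition satisfied by $v$ there, and then using the sign-definiteness property $\Im\int_\Gamma (S_\kappa\varphi)\overline{\varphi}\,ds>0$ for $\varphi\neq 0$ when $\Im\kappa>0$ (as quoted from~\cite{turc2}), I would conclude $\varphi=0$. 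Hence $(\mathcal{A}_{k,\kappa}^2)^\top$, and so $\mathcal{A}_{k,\kappa}^2$, is injective on $H^0(\Gamma)$.

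The argument goes through verbatim in $H^s(\Gamma)$ for $s\in[-1,0)$, since every mapping property invoked from Theorem~\ref{mapping} and Theorem~\ref{theo:inv} holds on that range, and the cases $s\in(0,1]$ follow by duality, exactly as in the proof of Theorem~\ref{thm1}. The only genuine subtlety — and the step I expect to require the most care — is the bookkeeping of signs in the definition of the auxiliary potential $v$ and in the resulting trace identities: one must choose the combination $SL_k\varphi \mp DL_k[2S_\kappa]\varphi$ and read off $\gamma_D^{1,2}v$, $\gamma_N^{1,2}v$ so that the kernel relation for $(\mathcal{A}_{k,\kappa}^2)^\top$ genuinely produces the homogeneous impedance condition $\gamma_N^2 v + Z^2\gamma_D^2 v=0$ (with the correct $+Z^2$, matching the interior formulation~\eqref{eq:CFIER2}), and so that the uniqueness theorem for the interior impedance problem with $\pm\Im Z^2>0$ applies. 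Everything else is a transcription of the preceding proof with the interior/exterior roles of $D_1$ and $D_2$ interchanged. One should also double-check that the case $\Im Z^2<0$ is genuinely covered by the interior uniqueness statement recalled earlier (it is, per the hypothesis $\pm\Im Z^2>0$ in Section~\ref{cfie}), so that no separate argument is needed there.
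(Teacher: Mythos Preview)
Your proposal is correct and follows essentially the same route as the paper: the same factorization $2(\tfrac12 I+K_0)(I-K_0)+\mathcal{A}_0^2$ for the Fredholm step, the same transpose, the same auxiliary potential $w=SL_k\psi-DL_k[2S_\kappa]\psi$, interior uniqueness to kill $w$ in $D_2$, and then the exterior Rellich-type step via $\Im\int_\Gamma(S_\kappa\psi)\overline{\psi}\,ds\geq 0$. The only quibble is your stated exterior traces: the jumps actually give $\gamma_D^1 v=-2S_\kappa\varphi$ and $\gamma_N^1 v=-\varphi$ (as in the paper), but since the two minus signs cancel in $\overline{\gamma_N^1 v}\,\gamma_D^1 v$ your conclusion is unaffected.
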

\begin{proof}
The fact that the operators $\mathcal{A}_{k,\kappa}^2$  are Fredholm of index 0 in $H^0(\Gamma)$ follows from the same arguments as in Theorem~\ref{thm1}. Indeed, we have 
\begin{eqnarray}
\mathcal{A}_{k,\kappa}^2&=&(I+K_0-2K_0^2) + \mathcal{A}_0^2=2\left(\frac{1}{2}I+K_0\right)\left(I-K_0\right)+ \mathcal{A}_0^2\nonumber\\
\mathcal{A}_0^2&=&2S_\kappa(N_\kappa-N_k)-2(K_\kappa-K_0)K_\kappa-2K_0(K_\kappa-K_0)+S_\kappa Z^2-2S_\kappa K_k^\top Z^2\nonumber\\
& +&(K_k-K_0)+S_k Z^2,\nonumber
\end{eqnarray}
and thus the operator $\mathcal{A}_{k,\kappa}^2$ is a compact perturbation of a Fredholm operator of index 0 in the space $H^0(\Gamma)$. The transpose of the operator $\mathcal{A}_{k,\kappa}^2$ is equal to 
\[
(\mathcal{A}_{k,\kappa}^2)^\top=\frac{1}{2}I-2N_k S_\kappa+Z^2 S_\kappa- 2 Z^2 K_k S_\kappa+K_k^\top+Z^2 S_k.
\]
Let $\psi\in Ker((\mathcal{A}_{k,\kappa}^2)^\top)$ and let us define
\[
w:=SL_k\psi-DL_k[2S_\kappa]\psi,\qquad \mathrm{in}\ \mathbb{R}^2\setminus\Gamma.
\]
We have that
\begin{eqnarray}
\gamma_D^2 w &=& S_\kappa\psi - 2K_k S_\kappa \psi+S_k \psi\nonumber\\
\gamma_N^2 w &=&\frac{1}{2}\psi + K_k^\top\psi - 2N_k S_\kappa\psi\nonumber
\end{eqnarray}
and hence
\[
\gamma_N^2 w + Z^2 \gamma_D^2 w = 0
\]
if we take into account that $\psi\in Ker((\mathcal{A}_{k,\kappa}^2)^\top)$. Now $w$ is a solution of Helmholtz equation in $D_2$ satisfying the impedance boundary condition $\gamma_N^2 w + Z^2 \gamma_D^2 w = 0$. Under the assumption that $\Im{Z^2}\neq 0$ we have that $w$ is identically zero in $D_2$, and hence 
\[
\gamma_D^2 w = 0\qquad \gamma_N^2 w =0.
\]
The last relation immediately implies
\[
\gamma_D^1 w = -2S_\kappa\psi\qquad \gamma_N^1 w = -\psi.
\]
Thus, $w$ is a radiative solution of the Helmholtz equation in $D_1$ that satisfies
\[
\Im \int_\Gamma \overline{\gamma_N^1 w}\ \gamma_D^1 w\ ds = 2\ \Im \int_\Gamma (S_\kappa \psi)\ \overline{\psi}\ ds\geq 0
\]
 which implies that $w=0$ in $D_1$. Consequently, the operator $(\mathcal{A}_{k,\kappa}^2)^\top$ is injective, and thus the operator $\mathcal{A}_{k,\kappa}^2$ is injective as well, which completes the proof in the space $H^0(\Gamma)$. Clearly, the arguments of the proof can be repeated verbatim in the Sobolev spaces $H^s(\Gamma)$ for all $s\in[-1,0)$. The result in the remaining Sobolev spaces $H^s(\Gamma),\ s\in(0,1]$ follows then from duality arguments.
\end{proof}
\begin{remark}
The results in Theorem~\ref{thm1} and Theorem~\ref{thm2} remain valid in the case when $Z^1\in H^1(\Gamma),\ \Im{Z^1}>0$ and $Z^2\in H^1(\Gamma),\ \pm\Im{Z^2}> 0$. Also, in the physically important cases when $Z^1\in L^\infty(\Gamma),\ \Im{Z^1}>0$ and $Z^2\in L^\infty(\Gamma),\ \pm\Im{Z^2}> 0$ (e.g. $Z^j$ are bounded but discontinuous), the CFIER equations~\eqref{eq:CFIER1} and~\eqref{eq:CFIER2} respectively are well posed in the spaces $H^0(\Gamma)$ for impedance data $f^j\in H^0(\Gamma)$.
\end{remark}

\section{Transmission impedance boundary value problems}\label{transmission}

We investigate next regularized formulations for transmission impedance boundary value problems that appear in Domain Decomposition Methods. Domain Decomposition Methods (DDM) are a class of algorithm for the solution of Helmholtz equations that consist of (1) decomposing the computational domain into smaller subdomains, and (2) interconnecting the solutions of subdomain problems by matching impedance conditions on the common interfaces between subdomains~\cite{boubendirDDM}. Fixed point considerations allow to recast the DDM algorithm in terms of the iterative solution of a linear system whose unknown is the global Robin (impedance) data defined on the union of all the subdomain interfaces. The choice of impedance conditions impacts considerably the rate of convergence of the iterative fixed point DDM algorithms. For instance, the use of piecewise constant impedances~\cite{Depres} hinders the fast convergence of DDM algorithms~\cite{boubendirDDM}. A remedy that leads to significant improvements in the rate of convergence of the DDM algorithms consists of the use of transmission impedance boundary conditions--that is on each interface $Z^j$ are suitably chosen (transmission) operators~\cite{boubendirDDM,Gander1,Nataf}. For instance, transmission/impedance operators $Z$ defined as Dirichlet-to-Neumann maps corresponding to adjacent subdomains are advocated as nearly optimal choices as the fixed point DDM iteration would converge in just two iterations~\cite{Nataf}. However, Dirichlet-to-Neumann operators, even when properly defined, are expensive to compute and thus their choice is not computationally advantageous. The common recourse is to use approximations of Dirichlet-to-Neumann operators that are inexpensive to compute and lead to well posed (transmission) impedance boundary value problems. Furthermore, given that Dirichlet-to-Neumann operators are non-local operators, it is easier to construct approximations of those in terms of non-local operators  (e.g. boundary integral operators). For instance, in the case of unbounded subdomains, such a choice is given by $Z^1 =2N_\kappa,\ \Im{\kappa}>0$, whereas in the case of bounded subdomains one could in principle choose $Z^2 =-2N_\kappa,\ \Im{\kappa}>0$. We note that similar operators, e.g. $Z=iN_{i\varepsilon},\ \varepsilon>0$ were used in the context of DDM methods~\cite{Steinbach}. These choices of impedance operators are suitable for boundary integral solvers for the ensuing subdomain problems, in any other contexts (e.g. finite element solvers) localized approximations of Dirichlet-to-Neumann operators are preferable~\cite{boubendirDDM}. 

We show in what follows that our CFIER methodology is applicable to both exterior and interior transmission impedance boundary value problems with the kind of impedance operators discussed above. First, given that~\cite{turc2}
\[
\Im\int_\Gamma N_\kappa\psi\ \overline{\psi}\ ds\geq 0,
\]
the arguments in~\cite{mclean:2000} can be extended to show that equations~\eqref{eq:Ac_i} in $D^1$ with $Z^1 =2N_\kappa,\ \Im{\kappa}>0$, or in $D^2$ with $Z^2 =-2N_\kappa,\ \Im{\kappa}>0$, still have unique solutions $u^1\in C^2(D_1)\cap H^1_{\rm loc}(D_1)$ and $u^2\in C^2(D_2)\cap H^1(D_2)$ respectively. We recast the exterior/interior Helmholtz equations with transmission impedance boundary conditions in the form of CFIER equations~\eqref{eq:CFIER1} and~\eqref{eq:CFIER2} respectively. We establish the following result:

\begin{theorem}\label{thm3}
Assume that $Z^1=2N_\kappa$ such that $\Im{\kappa}>0$. The operators $\mathcal{A}_{k,\kappa}^1$ defined in equations~\eqref{eq:CFIER1} are invertible with continuous inverses in the spaces $H^s(\Gamma)$ for all $s\in[-1,1]$.
\end{theorem}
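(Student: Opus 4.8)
The plan is to follow closely the proof of Theorem~\ref{thm1}; the two new ingredients are a more elaborate use of the Calder\'on identities~\eqref{eq:calderon} to exhibit a Fredholm principal part of $\mathcal{A}_{k,\kappa}^1$ when $Z^1=2N_\kappa$, and the replacement of the scalar impedance uniqueness statement used in Theorem~\ref{thm1} by the operator-valued one quoted at the beginning of this section.

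First I would show that $\mathcal{A}_{k,\kappa}^1$ is Fredholm of index zero on $H^0(\Gamma)$. Substituting $Z^1=2N_\kappa$ in the definition~\eqref{eq:CFIER1}, one has $\mathcal{A}_{k,\kappa}^1=\tfrac{1}{2}I-2S_\kappa N_k-2S_\kappa N_\kappa-4S_\kappa K_k^\top N_\kappa-K_k-2S_k N_\kappa$. Using the Calder\'on identities $S_\kappa N_\kappa=-\tfrac{1}{4}I+K_\kappa^2$ and $K_k^\top N_k=N_kK_k$ from~\eqref{eq:calderon}, together with the fact (Theorem~\ref{mapping}) that the differences $S_k-S_\kappa$, $K_k-K_0$ and $N_k-N_\kappa$ are smoothing, so that products such as $S_\kappa(N_k-N_\kappa)$, $(S_k-S_\kappa)N_\kappa$ and $S_\kappa(N_k-N_\kappa)K_k$ are compact on $H^0(\Gamma)$, a direct computation shows that modulo operators mapping $H^0(\Gamma)$ continuously into $H^1(\Gamma)$ (hence compact on $H^0(\Gamma)$), $\mathcal{A}_{k,\kappa}^1$ reduces to the polynomial $2I-6K_0^2-4K_0^3$ in $K_0$. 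Since $2-6x^2-4x^3=4(x+1)^2(\tfrac{1}{2}-x)$, this gives $\mathcal{A}_{k,\kappa}^1=4\left(I+K_0\right)^2\left(\tfrac{1}{2}I-K_0\right)+\mathcal{K}$ with $\mathcal{K}$ compact on $H^0(\Gamma)$. By Theorem~\ref{theo:inv} the operator $I+K_0$ is invertible and $\tfrac{1}{2}I-K_0$ is Fredholm of index zero on $H^0(\Gamma)$; since these operators commute, their product is Fredholm of index zero, and hence so is $\mathcal{A}_{k,\kappa}^1$.

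Given the Fredholm property, it suffices to prove injectivity, for which I would argue, exactly as in Theorem~\ref{thm1}, that the transpose with respect to the real $L^2(\Gamma)$ bilinear pairing is injective. Because $S_k$, $S_\kappa$ and $N_\kappa$ are all symmetric for this pairing, the transpose is $(\mathcal{A}_{k,\kappa}^1)^\top=\tfrac{1}{2}I-2N_kS_\kappa-Z^1S_\kappa-2Z^1K_kS_\kappa-K_k^\top-Z^1S_k$, formally identical to the one in Theorem~\ref{thm1}. For $\varphi\in\mathrm{Ker}\big((\mathcal{A}_{k,\kappa}^1)^\top\big)$ set $v:=SL_k\varphi+DL_k[2S_\kappa]\varphi$ in $\mathbb{R}^2\setminus\Gamma$; then $\gamma_D^1v=S_\kappa\varphi+2K_kS_\kappa\varphi+S_k\varphi$ and $\gamma_N^1v=-\tfrac{1}{2}\varphi+K_k^\top\varphi+2N_kS_\kappa\varphi$, and one checks directly that $\gamma_N^1v+2N_\kappa\gamma_D^1v=-(\mathcal{A}_{k,\kappa}^1)^\top\varphi=0$. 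Thus $v$ is a radiating solution of the Helmholtz equation in $D_1$ satisfying the transmission impedance condition $\gamma_N^1v+Z^1\gamma_D^1v=0$ with $Z^1=2N_\kappa$; since $\Im\int_\Gamma (N_\kappa\psi)\,\overline{\psi}\,ds\ge 0$~\cite{turc2}, the uniqueness result for this boundary value problem recalled at the start of this section forces $v\equiv 0$ in $D_1$, hence $\gamma_D^1v=\gamma_N^1v=0$. The jump relations~\eqref{traces} then give $\gamma_D^2v=-2S_\kappa\varphi$ and $\gamma_N^2v=\varphi$, and Green's first identity in $D_2$ yields $\int_{D_2}\big(|\nabla v|^2-k^2|v|^2\big)\,dx=-2\int_\Gamma(S_\kappa\varphi)\,\overline{\varphi}\,ds$; taking imaginary parts and using that $\Im\int_\Gamma(S_\kappa\varphi)\,\overline{\varphi}\,ds>0$ for $\varphi\ne 0$ when $\Im\kappa>0$~\cite{turc2} forces $\varphi=0$. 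Hence $(\mathcal{A}_{k,\kappa}^1)^\top$ is injective, so $\mathcal{A}_{k,\kappa}^1$ is injective and therefore invertible on $H^0(\Gamma)$; the argument carries over verbatim to $H^s(\Gamma)$ for $s\in[-1,0)$, and the remaining range $s\in(0,1]$ follows by duality.

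The step I expect to be the main obstacle is the reduction in the second paragraph. Unlike in Theorem~\ref{thm1}, where $Z^1$ is a bounded multiplication operator and the regular part $\mathcal{A}_0^1$ is outright smoothing, here $Z^1=2N_\kappa$ loses one derivative, so each of the products $S_\kappa Z^1$, $S_\kappa K_k^\top Z^1$ and $S_k Z^1$ contributes terms that are \emph{not} compact; the Calder\'on algebra must be organized carefully so that these non-compact contributions combine, modulo compact operators, into the single polynomial $4(I+K_0)^2(\tfrac{1}{2}I-K_0)$ in $K_0$, which is exactly the form to which Theorem~\ref{theo:inv} applies. Once this is in place, the remaining steps mirror those of Theorems~\ref{thm1} and~\ref{thm2}, with the scalar-impedance uniqueness replaced by its operator-valued counterpart and with the coercivity of $S_\kappa$ used as before to eliminate the kernel element.
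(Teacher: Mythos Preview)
Your proposal is correct and follows essentially the same approach as the paper: the same reduction of $\mathcal{A}_{k,\kappa}^1$ modulo compact operators to the polynomial $2I-6K_0^2-4K_0^3=4(\tfrac{1}{2}I-K_0)(I+K_0)^2$ in $K_0$, and the same injectivity argument via the transpose, the field $v=SL_k\varphi+DL_k[2S_\kappa]\varphi$, uniqueness for the transmission impedance problem in $D_1$, and the coercivity of $S_\kappa$ in $D_2$. The paper abbreviates the last step by referring back to Theorem~\ref{thm1}, whereas you spell out the Green's identity explicitly, but the arguments are otherwise identical.
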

\begin{proof}
We establish first that the operators $\mathcal{A}_{k,\kappa}^1$  are Fredholm of index 0 in $H^0(\Gamma)$. Using Clader\'on's identities we can recast $\mathcal{A}_{k,\kappa}^1$ into the following form
\begin{eqnarray}
\mathcal{A}_{k,\kappa}^1&=&(2I-6K_0^2-4K_0^3) + \mathcal{A}_0^{1,1}=4\left(\frac{1}{2}I-K_0\right)\left(I+K_0\right)^2+ \mathcal{A}_0^{1,1}\nonumber\\
\mathcal{A}_0^{1,1}&=&2S_\kappa(N_\kappa-N_k)-4(K_\kappa-K_0)K_\kappa-4K_0(K_\kappa-K_0)\nonumber\\
&-&4(S_\kappa-S_0)K_k^\top N_\kappa-4S_0(K_k^\top-K_0^\top)N_\kappa-4S_0 K_0^\top (N_\kappa-N_0)\nonumber\\
& +&(K_0-K_k)-2S_k(N_\kappa-N_k)-2(K_k-K_0)K_k-2K_0(K_k-K_0).\nonumber
\end{eqnarray}
It follows from the results in Theorem~\ref{mapping} that $\mathcal{A}_0^{1,1}:H^0(\Gamma)\to H^1(\Gamma)$ continuously, and thus $\mathcal{A}_0^{1,1}:H^0(\Gamma)\to H^0(\Gamma)$ is compact. Also, the operator 
\[
4\left(\frac{1}{2}I-K_0\right)\left(I+K_0\right)^2
\]
is Fredholm of index 0 in $H^0(\Gamma)$ since (a) the operator $\frac{1}{2}I-K_0$ is Fredholm of index 0 in $H^0(\Gamma)$, (b) the operator $I+K_0$ is invertible in $H^0(\Gamma)$, and (c) the two operators commute. We thus conclude that the operator $\mathcal{A}_{k,\kappa}^1$ is a compact perturbation of a Fredholm operator of index 0 in the space $H^0(\Gamma)$, and hence the operator $\mathcal{A}_{k,\kappa}^1$ is itself a Fredholm operator of index 0 in the same space.

Given the Fredholm property of the operator $\mathcal{A}_{k,\kappa}^1$, its invertibility is equivalent to its injectivity. We show in turn that the transpose of this operator with respect to the real scalar product in $H^0(\Gamma)$ is injective. The latter can be seen to equal
\[
(\mathcal{A}_{k,\kappa}^1)^\top=\frac{1}{2}I-2N_k S_\kappa-2N_\kappa S_\kappa- 4 N_\kappa K_k S_\kappa-K_k^\top-2N_\kappa S_k.
\]
Let $\varphi\in Ker((\mathcal{A}_{k,\kappa}^1)^\top)$ and let us define
\[
v:=SL_k\varphi+DL_k[2S_\kappa]\varphi,\qquad \mathrm{in}\ \mathbb{R}^2\setminus\Gamma.
\]
We have that
\begin{eqnarray}
\gamma_D^1 v &=& S_\kappa\varphi + 2K_k S_\kappa \varphi+S_k \varphi\nonumber\\
\gamma_N^1 v &=&-\frac{1}{2}\varphi + K_k^\top\varphi + 2N_k S_\kappa\varphi\nonumber
\end{eqnarray}
and hence
\[
\gamma_N^1 v + 2N_\kappa \gamma_D^1 v = 0
\]
if we take into account that $\varphi\in Ker((\mathcal{A}_{k,\kappa}^1)^\top)$. Now $v$ is a radiative solution of Helmholtz equation in $D_1$ satisfying the impedance boundary condition $\gamma_N^1 v + 2N_\kappa \gamma_D^1 v = 0$. Under the assumption that $\Im{\kappa}>0$ we have that $v$ is identically zero in $D_1$, and hence 
\[
\gamma_D^1 v = 0\qquad \gamma_N^1 v =0.
\]
The last relation immediately implies
\[
\gamma_D^2 v = -2S_\kappa\varphi\qquad \gamma_N^2 v = \varphi
\]
from which we get by the same arguments as in the proof of Theorem~\ref{thm1} that the operator $(\mathcal{A}_{k,\kappa}^1)^\top$ is injective. Thus, the operator $\mathcal{A}_{k,\kappa}^1$ is injective as well which completes the proof in the space $H^0(\Gamma)$. The proof for the remaining spaces $H^s(\Gamma)$ follows from the same arguments used in the proof of Theorem~\ref{thm1}.
\end{proof}

The arguments in the proofs of Theorem~\ref{thm2} and Theorem~\ref{thm3} imply the following result:
\begin{theorem}\label{thm4}
Assume that $Z^2=-2N_\kappa$ such that $\Im{\kappa}>0$. The operators $\mathcal{A}_{k,\kappa}^2$ defined in equations~\eqref{eq:CFIER2} are invertible with continuous inverses in the spaces $H^s(\Gamma)$ for all $s\in[-1,1]$.
\end{theorem}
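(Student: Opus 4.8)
The plan is to prove Theorem~\ref{thm4} by merging the Fredholmness argument used in the proof of Theorem~\ref{thm3} (the exterior operator with transmission impedance $2N_\kappa$) with the injectivity-of-the-transpose argument used in the proof of Theorem~\ref{thm2} (the interior operator). First I would substitute $Z^2=-2N_\kappa$ into the definition~\eqref{eq:CFIER2} of $\mathcal{A}_{k,\kappa}^2$, which yields
\[
\mathcal{A}_{k,\kappa}^2=\tfrac12 I-2S_\kappa N_k-2S_\kappa N_\kappa+4S_\kappa K_k^\top N_\kappa+K_k-2S_k N_\kappa .
\]

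To obtain the Fredholm property I would peel off the ``static'' principal part by repeatedly inserting wavenumber-$0$ operators and using the Calder\'on identities~\eqref{eq:calderon} at $k=0$, namely $S_0N_0=-\tfrac14 I+K_0^2$ and $S_0K_0^\top=K_0S_0$. Collecting the resulting Laplace operators I expect to land on the decomposition
\[
\mathcal{A}_{k,\kappa}^2=4\left(\tfrac12 I+K_0\right)\left(I-K_0\right)^2+\mathcal{A}_0^{2,1},
\]
in which $\mathcal{A}_0^{2,1}$ is a sum of terms each of which carries a difference $S_{k_1}-S_{k_2}$, $K_{k_1}-K_{k_2}$, $K^\top_{k_1}-K^\top_{k_2}$ or $N_{k_1}-N_{k_2}$ and therefore maps $H^0(\Gamma)$ continuously into $H^1(\Gamma)$ by Theorem~\ref{mapping}; hence $\mathcal{A}_0^{2,1}:H^0(\Gamma)\to H^0(\Gamma)$ is compact. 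Since $\tfrac12 I+K_0$ is Fredholm of index $0$ on $H^0(\Gamma)$ (Theorem~\ref{theo:inv}), $I-K_0$ is invertible on $H^0(\Gamma)$ (Theorem~\ref{theo:inv}, applied with $\lambda=-1\notin[-1/2,1/2)$), and the two factors commute, the leading operator is Fredholm of index $0$; consequently $\mathcal{A}_{k,\kappa}^2$ is Fredholm of index $0$ on $H^0(\Gamma)$.

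Given the Fredholm property, invertibility is equivalent to injectivity, which I would establish via the real transpose
\[
(\mathcal{A}_{k,\kappa}^2)^\top=\tfrac12 I-2N_kS_\kappa-2N_\kappa S_\kappa+4N_\kappa K_kS_\kappa+K_k^\top-2N_\kappa S_k .
\]
For $\psi\in\mathrm{Ker}\big((\mathcal{A}_{k,\kappa}^2)^\top\big)$ I would set, exactly as in the proof of Theorem~\ref{thm2}, $w:=SL_k\psi-DL_k[2S_\kappa]\psi$ in $\mathbb{R}^2\setminus\Gamma$; the trace formulas~\eqref{traces} give $\gamma_D^2 w=S_\kappa\psi-2K_kS_\kappa\psi+S_k\psi$ and $\gamma_N^2 w=\tfrac12\psi+K_k^\top\psi-2N_kS_\kappa\psi$, so that $\gamma_N^2 w+Z^2\gamma_D^2 w=\gamma_N^2 w-2N_\kappa\gamma_D^2 w=(\mathcal{A}_{k,\kappa}^2)^\top\psi=0$. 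Thus $w$ solves the interior Helmholtz equation in $D_2$ with the transmission impedance condition associated to $Z^2=-2N_\kappa$, and by the uniqueness result recalled at the beginning of Section~\ref{transmission} (the extension of~\cite{mclean:2000} based on $\Im\int_\Gamma N_\kappa\psi\,\overline{\psi}\,ds\geq 0$) it follows that $w\equiv 0$ in $D_2$, hence $\gamma_D^2 w=\gamma_N^2 w=0$. The jump relations then force $\gamma_D^1 w=-2S_\kappa\psi$ and $\gamma_N^1 w=-\psi$, so $w$ is a radiative solution of the Helmholtz equation in $D_1$ with $\Im\int_\Gamma\overline{\gamma_N^1 w}\,\gamma_D^1 w\,ds=2\,\Im\int_\Gamma(S_\kappa\psi)\,\overline{\psi}\,ds\geq 0$; the same Rellich/unique-continuation argument as in the proof of Theorem~\ref{thm2} (using $\Im\int_\Gamma(S_\kappa\psi)\overline{\psi}\,ds\geq 0$ from~\cite{turc2}) gives $w\equiv 0$ in $D_1$ as well, whence $\psi=-\gamma_N^1 w=0$. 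Therefore $(\mathcal{A}_{k,\kappa}^2)^\top$, and hence $\mathcal{A}_{k,\kappa}^2$, is injective, so $\mathcal{A}_{k,\kappa}^2$ is invertible on $H^0(\Gamma)$. The argument goes through verbatim on $H^s(\Gamma)$ for $s\in[-1,0)$ since the mapping properties of Theorems~\ref{mapping} and~\ref{theo:inv} hold there, and the range $s\in(0,1]$ follows by the usual duality argument.

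I expect the only genuinely laborious step to be the algebraic reduction to the principal part $4(\tfrac12 I+K_0)(I-K_0)^2$: one must track all the cross terms produced by writing $S_\kappa N_k$, $S_\kappa N_\kappa$, $S_k N_\kappa$ and $S_\kappa K_k^\top N_\kappa$ as ``Laplace operator $+$ smoothing remainder'' and confirm that every remainder really gains a derivative (combining the smoothing of the wavenumber differences in Theorem~\ref{mapping} with the individual mapping orders of $S_\kappa$, $K_k^\top$ and $N_\kappa$ to absorb the surviving factors, exactly as in the term $\mathcal{A}_0^{1,1}$ of Theorem~\ref{thm3}). The analytic core --- interior uniqueness for the $-2N_\kappa$ impedance and the sign of $\Im\int_\Gamma(S_\kappa\psi)\overline{\psi}\,ds$ --- is already available from Section~\ref{transmission} and~\cite{turc2}, so no new difficulty arises there.
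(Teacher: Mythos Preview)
Your proposal is correct and follows essentially the same route as the paper: the paper gives exactly the decomposition $\mathcal{A}_{k,\kappa}^2=4\bigl(\tfrac12 I+K_0\bigr)(I-K_0)^2+\mathcal{A}_0^2$ with an explicitly written compact remainder, invokes the argument of Theorem~\ref{thm3} for the Fredholm property, and defers injectivity to the proof of Theorem~\ref{thm2} precisely as you outline. The only cosmetic difference is that the paper spells out the remainder $\mathcal{A}_0^2$ term by term, whereas you describe it qualitatively.
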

\begin{proof}
Since
\begin{eqnarray}
\mathcal{A}_{k,\kappa}^2&=&(2I-6K_0^2+4K_0^3) + \mathcal{A}_0^2=4\left(\frac{1}{2}I+K_0\right)\left(I-K_0\right)^2+ \mathcal{A}_0^2\nonumber\\
\mathcal{A}_0^2&:=&2S_\kappa(N_\kappa-N_k)-4(K_\kappa-K_0)K_\kappa-4K_0(K_\kappa-K_0)\nonumber\\
&+&4(S_\kappa-S_0)K_k^\top N_\kappa+4S_0(K_k^\top-K_0^\top)N_\kappa+4S_0 K_0^\top(N_\kappa-N_0)\nonumber\\
& +&(K_k-K_0)-2S_k(N_\kappa-N_k)-2(K_k-K_0)K_k-2K_0(K_k-K_0),\nonumber
\end{eqnarray}
similar arguments to those used in Theorem~\ref{thm3} deliver the Fredholm property of the operators $\mathcal{A}_{k,\kappa}^2$ in the space $L^2(\Gamma)$. The injectivity of the operators $\mathcal{A}_{k,\kappa}^2$, in turn, can be established exactly as in the proof of Theorem~\ref{thm2}.
\end{proof}
\begin{remark}
  Transmission interior impedance boundary value problems with impedance operators of the form $Z^2=2 N_\kappa$ with  $\Im{\kappa}>0$ can also be shown to be well posed. However, the proof of Theorem~\ref{thm4} does not go through in this case. The reason is that the terms that contain the identity are no longer featured in the operators $\mathcal{A}_{k,\kappa}^2$ and thus the Fredholm argument does not follow from the same considerations. 
  \end{remark}

In the case when the wavenumbers differ in adjacent subdomains---see Figure~\ref{fig:domain}, the DDM matching procedure of transmission impedance boundary conditions in principle calls for approximations of subdomain Dirichlet-to-Neumann operators corresponding to different wavenumbers on each interface. For example, this requirement would lead to a Helmholtz equation in the domain $D_1$ with transmission impedance boundary conditions whose operators $Z^2$ should approximate on the interface between $D_1$ and $D_j$ the restriction to that interface of the Dirichlet-to-Neumann operators for the domains $D_j$ and wavenumbers $k_j$ for $j=2,\ldots,5$. The most natural idea would be to use operators $Z^2$ that are restrictions of the operators $-2N_{k_j+i\varepsilon_j},\ j=2,\ldots,5$ to corresponding subdomain interfaces. This procedure would amount to using local interface impedance operators of the form
\[
Z^2_{1j} = -2 R_{1j}N_{k_j+i\varepsilon_j}E_{1j}:\widetilde{H}^{1/2}(\Gamma_{1j})\to H^{-1/2}(\Gamma_{1j}),\ j=2,\ldots,5
\]
where $E_{1j}:\widetilde{H}^{1/2}(\Gamma_{1j})\to H^{1/2}(\Gamma_1)$ is the extension by zero operator, and $R_{1j}:H^{-1/2}(\Gamma_1)\to H^{-1/2}(\Gamma_{1j})$ is the restriction operator defined by duality
\[
\langle R_{1j}\varphi,\psi\rangle=\langle \varphi,E_{1j}\psi\rangle,\quad \varphi\in H^{-1/2}(\Gamma_1),\ \psi\in\widetilde{H}^{1/2}(\Gamma_{1j}).
\]
In the formulas above we denoted $\Gamma_1:=\partial D_1$, and $\Gamma_{1j}:=\partial D_1\cap \partial D_j$ for $j=2,\ldots,5$. It can be clearly seen from the mapping properties of the operators $Z^2_{1j}, j=2,\ldots,5$ that a simple summation of these would not lead to a global impedance operator defined on $\Gamma_1$ that maps $H^{1/2}(\Gamma_1)$ to $H^{-1/2}(\Gamma_1)$. This shortcoming can be overcome by resorting to impedance operators that blend local impedance operators corresponding to interfaces $\Gamma_{1j}, j=2,\ldots,5$ through partitions of unity:
\begin{equation}\label{gen_impedance_4_domains}
  Z^2_b=-2\sum_{j=2}^5 \chi_j N_{k_j+i\varepsilon_j}\chi_j,\quad \varepsilon_j\geq 0
\end{equation}
where $\chi_j,j=2,\ldots,5$ are cut-off functions such that $\sum_{j=2}^5 \chi_j^2=1$ on $\partial D_1$, $\chi_j\in C_0^\infty(\partial D_1), j=2,\ldots,5$, and $\{\mathbf{x}:\chi_j(\mathbf{x})=1\}\subset \partial D_1\cap\partial D_j$ for $j=2,\ldots,5$. We note that
\[
\Im \int_{\Gamma_1} Z^2_b\psi\ \overline{\psi}\ ds =-2\sum_{j=2}^5 \Im \int_{\Gamma_1} \chi_j  N_{k_j+i\varepsilon_j}\chi_j \psi\ \overline{\psi}\ ds=-2\sum_{j=2}^5 \Im\int_{\Gamma_1} N_{k_j+i\varepsilon_j}\psi_j\ \overline{\psi_j}\ ds<0,\quad \psi\neq 0
\]
where $\psi_j:=\chi_j\ \psi$. These types of operators that use partition of unity blending were originally used in~\cite{Levadoux} to construct coercive approximations of Dirichlet to Neumann operators. It can be shown using ideas from~\cite{Levadoux,BorelLevadouxAlouges} that $Z_b^2+2N_{\kappa}$ is a compact operator from $H^{1/2}(\Gamma_1)$ to $H^{-1/2}(\Gamma_1)$ (and by interpolation from $H^1(\Gamma_1)$ to $L^2(\Gamma_1)$), and thus the results in Theorem~\ref{thm4} can be extended to this new choice of impedance operator.

\begin{figure}
\centering
\includegraphics[height=60mm]{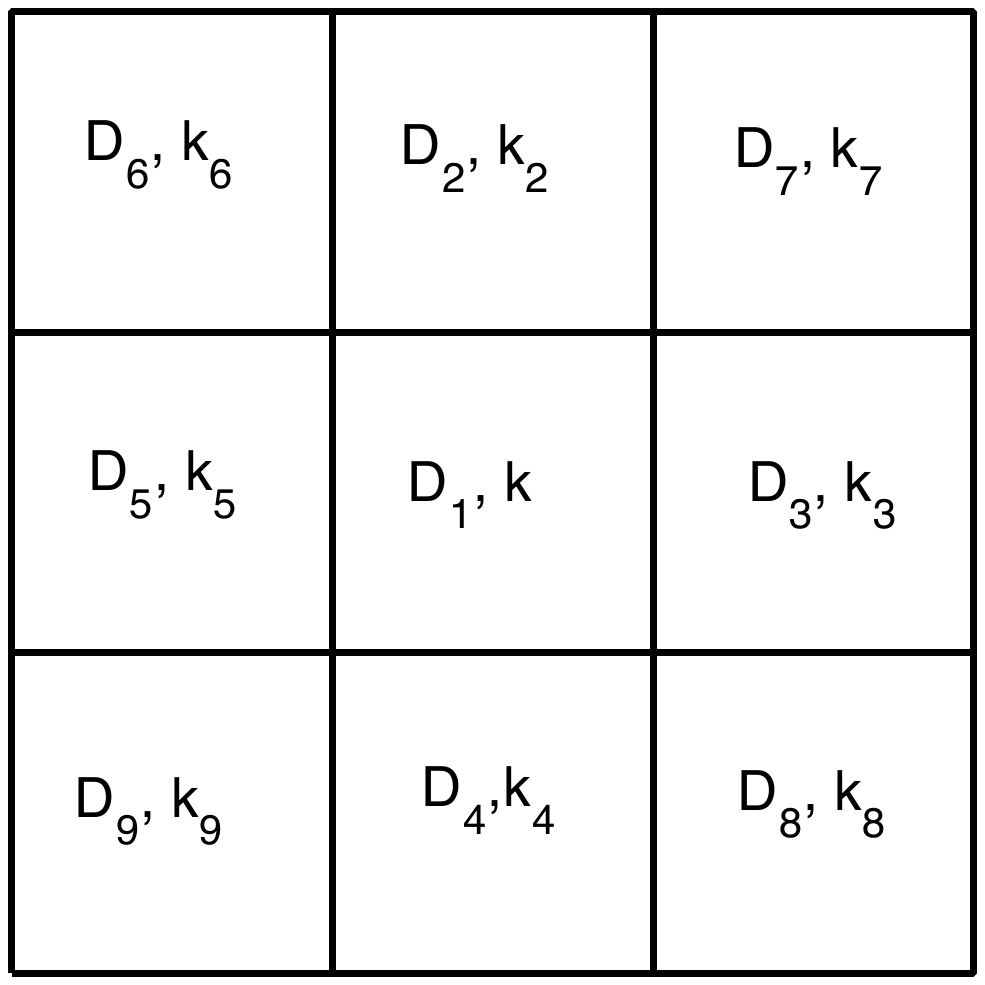}
\caption{Typical DDM configuration.}
\label{fig:domain}
\end{figure}

\section{High-order Nystr\"om methods for the discretization of the CFIER formulations \label{singular_int}}

\parskip 10pt plus2pt minus1pt
\parindent0pt

We present in this section Nystr\"om discretizations of the formulations CFIER~\eqref{eq:CFIER1} and~\eqref{eq:CFIER2} assuming various choices of the impedance $Z^j$. The key components of these discretization are (a) the use of sigmoidal-graded meshes that accumulate points polynomially at corners, (b) the splitting of the kernels of the weighted parametrized operators into smooth and singular components, (c) trigonometric interpolation of the densities of the boundary integral operators, and (d) analytical expressions for the integrals of products of periodic singular and weakly singular kernels and Fourier harmonics. In cases when the impedance $Z^j$ are merely bounded and possibly discontinuous, we reformulate the aforementioned CFIER integral equations in terms of {\em more\ regular} solutions and weighted versions of the boundary integral operators in the Calder\'{o}n's calculus.

We assume that the closed curve $\Gamma$ has corners at $\mathbf{x}_1,\mathbf{x}_2,\ldots,\mathbf{x}_P$ whose apertures measured inside $D_2$ are respectively $\gamma_1,\gamma_2,\ldots,\gamma_P$, and that $\Gamma\setminus\{\mathbf{x}_1,\mathbf{x}_2,\ldots,\mathbf{x}_P\}$ is piecewise analytic. Let  $(x_1(t),x_2(t))$ be a  $2\pi$ periodic parametrization of $\Gamma$  so that each of the ( possibly curved) segments  $[\mathbf{x}_j,\mathbf{x}_{j+1}]$ is mapped by $(x_1(t),x_2(t))$ with $t\in[T_j,T_{j+1}]$. We assume that $x_1(t),x_2(t)$ are continuous and that on each interval $[T_j,T_{j+1}]$  are smooth with
$(x_1'(t))^2+(x_2'(t))^2>0$ (the one-sided derivatives are taken for $t=T_j,T_{j+1}$). Consider  
the sigmoid transform introduced by Kress
\begin{eqnarray}\label{eq:cov_w}
w(s)&=&\frac{T_{j+1}[v(s)]^p+T_j[1-v(s)]^p}{[v(s)]^p+[1-v(s)]^p},\ T_j\leq s\leq T_{j+1},\ 1\leq j\leq P\\
v(s)&=&\left(\frac{1}{p}-\frac{1}{2}\right)\left(\frac{T_{j}+T_{j+1}-2s}{T_{j+1}-T_j}\right)^3+\frac{1}{p}\ \frac{2s-T_j-T_{j+1}}{T_{j+1}-T_j}+\frac{1}{2}\nonumber
\end{eqnarray} 
where $p\geq 2$.  The function $w$ is a smooth, increasing, bijection on each of the intervals $[T_j,T_{j+1}]$ for $1\leq j\leq P$, with $w^{(k)}(T_j)=w^{(k)}(T_{j+1})=0$
for $1\leq k\leq p-1$. We then define the new parametrization
\[
 \mathbf{x}(t)=(x_1(w(t)),x_2(w(t)))
\]
extended by $2\pi-$periodicity, if needed, to any $t\in\mathbb{R}$.

A central issue in Nystr\"om discretizations of the CFIER equations is the regularity of the solutions $\gamma_D^1u$ and $\gamma_D^2u$. In the case when $Z^j\in L^\infty(\Gamma)$ and the impedance data $f_j\in L^2(\Gamma)$ we have already seen that $\gamma_D^ju\in H^1(\Gamma)$ for $j=1,2$. Similarly, in the transmission impedance case, we still have that $\gamma_D^ju\in H^1(\Gamma)$ provided that $f_j\in L^2(\Gamma)$. In all these cases Sobolev embedding results imply that $\gamma_{D}^j u\in C^{0,\beta}(\Gamma)$ for $0<\beta<1$. In the case of piecewise constant impedance $Z^j$ it is more profitable to define weighted Dirichlet traces of solutions of Helmholtz equations 
\[
\gamma_{D}^{j,w}u:=|\mathbf{x}'|\gamma_{D}^j u.
\]
It can be seen that $\gamma_{D}^{j,w}u$ are more regular than $\gamma_{D}^j u$, and their regularity is controlled by the degree $p$ of the sigmoid transform. In addition, the weighted quantities $\gamma_{D}^{j,w}u$ vanish at the corners. We present in what follows parametrized versions of the four boundary integral operators in the Calder\'on calculus. These operators act upon two types of $2\pi$ periodic densities: (1) densities $\varphi\in C^\alpha[0,2\pi]$ where $\alpha$ is large enough which in addition behave as $|t-T_j|^r,\ r>0$ for all $1\leq j\leq P+1$; and (2) densities $\psi\in C^{0,\beta}[0,2\pi], 0<\beta<1$ which are H\"older continuous and periodic.

We start by defining two versions of parametrized single layer operators in the form
\begin{equation*}\label{eq:sl_w}
(S_k^{\mathbf{x},w}\varphi)(t):=
\int_0^{2\pi} G_k(\mathbf x(t) -\mathbf x(\tau))\varphi(\tau)d\tau
\end{equation*}
and
\begin{equation*}\label{eq:sl}
(S_k^{\mathbf{x}}\psi)(t):=
\int_0^{2\pi} G_k(\mathbf x(t) -\mathbf x(\tau))|\mathbf{x}'(\tau)|\psi(\tau)d\tau.
\end{equation*}
We define next two versions of parametrized double layer operators 
\begin{equation*}\label{eq:k_w}
(K_k^{\mathbf{x}}\psi)(t):=\int_0^{2\pi}\frac{\partial G_k(\mathbf x(t)-\mathbf x(\tau))}{\partial\mathbf{n}(\mathbf x(\tau))}|{\bf x}'(\tau)| \psi(\tau)d\tau
\end{equation*}
and
\begin{equation*}\label{eq:k_w1}
(K_k^{\mathbf{x},w}\varphi)(t):=\int_0^{2\pi}\frac{\partial G_k(\mathbf x(t)-\mathbf x(\tau))}{\partial\mathbf{n}(\mathbf x(\tau))}|{\bf x}'(t)| \varphi(\tau)d\tau
\end{equation*}
and two versions of parametrized adjoint double layer operators defined as
\begin{equation*}\label{eq:kt_w}
(K_k^{\mathbf{x},\top,w}\varphi)(t):=\int_0^{2\pi}|{\bf x}'(t)|\frac{\partial G_k(\mathbf x(t)-\mathbf x(\tau))}{\partial\mathbf{n}(\mathbf x(t))} \varphi(\tau)d\tau  
\end{equation*}
and
\begin{equation*}\label{eq:kt}
(K_k^{\mathbf{x},\top}\psi)(t):=\int_0^{2\pi}|{\bf x}'(t)|\frac{\partial G_k(\mathbf x(t)-\mathbf x(\tau))}{\partial\mathbf{n}(\mathbf x(t))} |\mathbf{x}'(\tau)|\psi(\tau)d\tau.  
\end{equation*}
Finally, we defined two versions of parametrized weighted hypersingular operator as
\begin{equation*}\label{eq:n_w}
\begin{split}
(N_k^{\mathbf{x}}\psi)(t):=&
k^{2}\int_0^{2\pi} G_k(\mathbf x(t) -\mathbf x(\tau))
|\mathbf{x}'(t)|\ |\mathbf{x}'(\tau)| (\mathbf{n}(\mathbf x(t))\cdot\mathbf{n}(\mathbf x(\tau)))\psi(\tau)d\tau\\
&+ {\rm PV}
\int_\Gamma |\mathbf{x}'(t)|(\partial_s G_k)(\mathbf x(t) -\mathbf x(\tau)) \psi'(\tau)d\tau
\end{split}
\end{equation*}
and
\begin{equation*}\label{eq:n_w1}
\begin{split}
(N_k^{\mathbf{x},w}\varphi)(t):=&
k^{2}\int_0^{2\pi} G_k(\mathbf x(t) -\mathbf x(\tau))
|\mathbf{x}'(t)|\ (\mathbf{n}(\mathbf x(t))\cdot\mathbf{n}(\mathbf x(\tau)))\varphi(\tau)d\tau\\
&+ {\rm PV}
\int_\Gamma |\mathbf{x}'(t)|(\partial_s G_k)(\mathbf x(t) -\mathbf x(\tau)) \frac{d}{d\tau}\left(\frac{\varphi(\tau)}{|\mathbf{x}'(\tau)|}\right)d\tau
\end{split}
\end{equation*}
We incorporate the parametrized versions of the four boundary integral operators of Calder\'on calculus into parametrized versions of the CFIER formulations considered in this text. First, we use Calder\'{o}n's identities to express the integral operators in the CFIER formulations~\eqref{eq:CFIER1} in the following form that bypasses direct evaluation of hypersingular operators
\begin{equation}\label{eq:CFIER1C}
\mathcal{A}_{k,\kappa}^1= I-2S_\kappa (N_k-N_\kappa)-2K_\kappa^2 -S_\kappa Z^1-2S_\kappa K_k^\top Z^1-K_k-S_k Z^1,\qquad Z^1\in L^\infty(\Gamma)
\end{equation}
and
\begin{eqnarray}\label{eq:CFIER1CN}
\mathcal{A}_{k,\kappa}^1&=& 2I-2(S_\kappa -S_k) (N_k-N_\kappa)-4K_\kappa^2 -2K_k^2\nonumber\\
&-&4S_\kappa K_k^\top(N_\kappa-N_k)-4S_\kappa(N_k-N_\kappa)K_k-4K_\kappa^2K_k,\qquad Z^1=2N_\kappa.
\end{eqnarray}
 Similar considerations apply in the case of the CFIER formulations for the interior impedance problems~\eqref{eq:CFIER2}. Using the parametrized versions of the boundary integral operators described above, we consider both non-weighted and weighted parametrized versions of the CFIER equations. Specifically, we discretize equations~\eqref{eq:CFIER1} using the operators
\begin{eqnarray}\label{eq:CFIER1param}
\mathcal{A}_{k,\kappa}^{\mathbf{x},1}&=& I-2S_\kappa^{\mathbf{x},w} [(N_k^{\mathbf{x}}-N_0^{\mathbf{x}})- (N_\kappa^{\mathbf{x}}-N_0^{\mathbf{x}})]-2(K_\kappa^{\mathbf{x}})^2 -S_\kappa^{\mathbf{x}} Z^1\nonumber\\
&-&2S_\kappa^{\mathbf{x},w} K_k^{\top,\mathbf{x}} Z^1-K_k^{\mathbf{x}}-S_k^{\mathbf{x}} Z^1,
\end{eqnarray}
where $N_0^{\mathbf{x}}$ are the parametrized hypersingular operators for $k=0$, and we solve the parametrized integral equation
\begin{equation}\label{eq:CFIER_pw}
\mathcal{A}_{k,\kappa}^{\mathbf{x},1}\gamma_D^{1}u=-\gamma_N^1 u^{inc}-Z^1\gamma_D^1 u^{inc}.
\end{equation}
We note that the difference operators $N_k^{\mathbf{x}}-N_0^{\mathbf{x}}$ can be written in a simpler form that does not involve differentiation~\cite{dominguez2015well}. We use similar albeit slightly more complicated discrete versions in the case $Z^1=2N_\kappa$. In the case when we use weighted Dirichlet traces as unknowns of the CFIER formulations, the underlying parametrized operators take on the following form:
\begin{eqnarray}\label{eq:CFIER11paramw}
\mathcal{A}_{k,\kappa}^{\mathbf{x},1,1}&=& I-2|\mathbf{x}'|S_\kappa^{\mathbf{x},w} [(N_k^{\mathbf{x},w}-N_0^{\mathbf{x},w})- (N_\kappa^{\mathbf{x},w}-N_0^{\mathbf{x},w})]-2(K_\kappa^{\mathbf{x},w})^2 -|\mathbf{x}'|S_\kappa^{\mathbf{x},w} Z^1\nonumber\\
&-&2|\mathbf{x}'|S_\kappa^{\mathbf{x},w} K_k^{\top,\mathbf{x},w} Z^1-K_k^{\mathbf{x},w}-|\mathbf{x}'|S_k^{\mathbf{x},w} Z^1,
\end{eqnarray}
and we solve the parametrized weighted integral equation
\begin{equation}\label{eq:CFIER_weighted}
\mathcal{A}_{k,\kappa}^{\mathbf{x},1,1}\gamma_D^{1,w}u=-|\mathbf{x}'|\gamma_N^1 u^{inc}-|\mathbf{x}'|Z^1\gamma_D^1 u^{inc}.
\end{equation}
We denote by $\mathcal{A}_{k,\kappa}^{\mathbf{x},2}$ and $\mathcal{A}_{k,\kappa}^{\mathbf{x},2,1}$ the counterparts of the operators $\mathcal{A}_{k,\kappa}^{\mathbf{x},1}$ and $\mathcal{A}_{k,\kappa}^{\mathbf{x},1,1}$ for interior impedance boundary value problems. The parametrized integral operators that feature in equations~\eqref{eq:CFIER1param} and~\eqref{eq:CFIER11paramw} can be expressed in the generic form
\[
(\mathcal{I}\varphi)(t)=\int_0^{2\pi}I(t,\tau)\varphi(\tau)d\tau
\]
where
\[
I(t,\tau)=I_1(t,\tau)\ln\left(4\sin^2\frac{t-\tau}{2}\right)+I_2(t,\tau)
\]
with $I_1(t,\tau)$ and $I_2(t,\tau)\varphi(\tau)$ being regular enough functions that in particular are {\em bounded} for $t=\tau$~\cite{dominguez2015well}. The splitting techniques presented above can be adapted for the evaluation of the operators that involve $\kappa,\ \Im{\kappa}>0$ using additional smooth cutoff function supported in neighborhoods of the target points $t$ according to the procedures introduced in~\cite{turc1}.

In order to derive fully discrete versions of the CFIER equations~\eqref{eq:CFIER_pw} and~\eqref{eq:CFIER_weighted} we use global trigonometric interpolation of the quantities $\gamma_D^1 u$ and $\gamma_D^{1,w}u$. We choose an equi-spaced splitting of the interval $[0,2\pi]$ into $2n$ points so that the meshsize is equal to $h=\pi/n$. We note that since $T_j$ are chosen such that $T_{j+1}-T_j$ are proportional (with the same constant of proportionality) to the lengths of the arcs of $\Gamma$ from $\mathbf{x}_j$ to $\mathbf{x}_{j+1}$ for all $j$, the number of discretization points per subinterval $[T_j,T_{j+1}],\ 1\leq j\leq P$ may differ from each other. We thus consider the equi-spaced collocation points $\{t_0^{(n)}+h/2,t_1^{(n)}+h/2,\ldots,t_{2n-1}^{(n)}+h/2\}$ that exclude corner points and  the interpolation problem with respect to these nodal points
in the space $\mathbb{T}_n$ of trigonometric 
polynomials of the form
$$v(t)=\sum_{m=0}^n a_m\cos{mt}+\sum_{m=1}^{n-1}b_m\sin{mt}$$
is uniquely solvable~\cite{Kress}. We denote by $P_n:C[0,2\pi]\to 
\mathbb{T}_n$ the corresponding trigonometric polynomial interpolation operator . We use the quadrature rules~\cite{KressH}
\begin{eqnarray}\label{eq:quad1}
\int_0^{2\pi}\ln\left(4\sin^2\frac{t-\tau}{2}\right)f(\tau)d\tau&\approx&\int_0^
{2\pi}\ln\left(4\sin^2\frac{t-\tau}{2}\right)(P_nf)(\tau)d\tau\nonumber\\
&=&\sum_{i=0}^{2n-1}R_i^{(n)}(t)f(t_i^{(n)})
\end{eqnarray}
where the expressions $R_j^{(n)}(t)$ are given by
$$R_i^{(n)}(t)=-\frac{2\pi}{n}\sum_{m=1}^{n-1}\frac{1}{m}\cos{m(t-t_i^{(n)})}
-\frac{\pi}{n^2}\cos{n(t-t_i^{(n)})}.$$
We also use the trapezoidal rule
\begin{equation}\label{eq:quad2}
\int_0^{2\pi}f(\tau)d\tau\approx\int_0^{2\pi}(P_nf)(\tau)d\tau=\frac{\pi}{n}
\sum_{i=0}^{2n-1}f(t_i^{(n)}).
\end{equation}
Applying these quadratures rule we obtain fully discrete versions of the parametrized operators in equations~\eqref{eq:CFIER1param} and~\eqref{eq:CFIER11paramw}. We note that the same considerations apply to discretizations of interior impedance boundary value problems.

\subsection{Numerical results}

We present in this section a variety of numerical results that
demonstrate the properties of the CFIER formulations considered in this text. Solutions of the linear systems arising from the Nystr\"om discretizations of the transmission integral equations described in Section~\ref{singular_int} are obtained by means of the fully complex, unrestarted version of the iterative solver GMRES~\cite{SaadSchultz}.  The value of the complex wavenumber $\kappa$ in the CFIER formulations considered was taken to be $\kappa=k+i$ in all the numerical experiments; our extensive numerical experiments suggest that these values of $\kappa$ leads to nearly optimal numbers of GMRES iterations to reach desired (small) GMRES relative residuals. We also present in each table the values of the GMRES relative residual tolerances used in the numerical experiments.
 
We present a variety of numerical experiments concerning the following two Lipschitz geometries: (a) a square centered at the origin whose sides equal to 4, and (b) a L-shape scatterer of sides equal to 4 and indentation equal to 2. We illustrate the performance of our solvers based on the Nystr\"om discretization of the CFIER formulations in two cases of boundary data: (1) point source boundary data for interior problems and (2) plane wave incidence for exterior problems, that is scattering experiments. In case (1) we consider
\[
u^0(\mathbf{x}):=\frac{i}{4}H_0^{(1)}(k|\mathbf{x}-\mathbf{x}_0|),\ \mathbf{x}\in D^2,\ \mathbf{x}_0\in\mathbb{R}^2\setminus \overline{D^2}
\]
and an impedance boundary data constructed as
\[
f^2:=\gamma_N^2u^0+Z^2\gamma_D^2 u^0. 
\]
Clearly, the solution of the interior impedance boundary value problem with data $f^2$ defined above must equal $u^0$ in the domain $\overline{D^2}$. Therefore, in all the numerical experiments that involve interior problems we report the error between the computed boundary values of the solution of the interior impedance boundary value problem with data $f^2$ and the exact boundary values of $u^0$ defined above:
\begin{equation}\label{errorGamma}
  \varepsilon_\Gamma={\rm max}|\gamma_D^2 u^{2,\rm
calc}(\mathbf{x})-\gamma_D^2u^0(\mathbf{x})|
\end{equation}
at the grids points $\mathbf{x}\in\Gamma$ where the numerical solution $\gamma_D^2 u^{2,\rm calc}$ is computed. We note that the latter quantity $\gamma_D^2 u^{2,\rm calc}$ is actually the solution of the discretizations of the CFIER formulations considered in this text. In the case when we use weighted interior formulations of the type~\eqref{eq:CFIER_weighted}, we adjust slightly the definition of the error~\eqref{errorGamma} in the following form
\begin{equation}\label{errorGammaw}
  \varepsilon_\Gamma^w={\rm max}|\gamma_D^{2,w} u^{2,\rm
calc}(\mathbf{x})-\gamma_D^{2,w}u^0(\mathbf{x})|
\end{equation}
given that $\gamma_D^{2,w} u^{2,\rm calc}$ is actually the solution of the weighted CFIER formulations that is being numerically computed.

For every scattering experiment we consider plane-wave incidence $u^{\rm inc}$ and we present maximum far-field errors, that is we choose sufficiently many directions $\hat{\mathbf{x}}=\frac{\mathbf{x}}{|\mathbf{x}|}$ (more precisely 1024 such directions) and for each direction we compute the far-field amplitude $u^{1}_\infty(\hat{\mathbf{x}})$ defined as
\begin{equation}
\label{eq:far_field}
u^{1}(\mathbf{x})=\frac{e^{ik|\mathbf{x}|}}{\sqrt{|\mathbf{x}|}}\left(u^{1}_\infty(\hat{\mathbf{x}})+\mathcal{O}\left(\frac{1}{|\mathbf{x}|}\right)\right),\
|\mathbf{x}|\rightarrow\infty.\\
\end{equation}
The maximum far-field errors were evaluated through comparisons of the
numerical solutions $u_\infty^{1, \rm calc}$ corresponding to either formulation with reference solutions $u_\infty^{1,\rm ref}$ by means of the relation
\begin{equation}
\label{eq:farField_error}
\varepsilon_\infty={\rm max}|u_\infty^{1,\rm
calc}(\hat{\mathbf{x}})-u_\infty^{1,\rm ref}(\hat{\mathbf{x}})|
\end{equation}
The latter solutions $u_\infty^{1,\rm ref}$ were produced using solutions corresponding with refined discretizations based on the CFIER formulations with GMRES residuals of $10^{-12}$ for all geometries. Besides errors appropriately defined in each case, we display the numbers of iterations required by the GMRES solver to reach specified relative residuals.  We used in the numerical experiments discretizations ranging from 6 to 12 discretization points per wavelength, for frequencies $k$ in the medium to the high-frequency range corresponding to scattering problems of sizes ranging from $5$ to $80$ wavelengths. We used both non-weighted and weighted versions of the CFIER formulations which are referred to in the tables by their underlying integral operators. The columns ``Unknowns'' in all Tables display the numbers of unknowns used in each case, which equal to the value $2n$ defined in Section~\ref{singular_int}. We have used sigmoid transforms with a value $p=3$ in all the numerical experiments.  In all of the scattering experiments we considered point source solutions located at $\mathbf{x}_0=(4,4)$ and plane-wave incident fields of direction $d=(0,-1)$. 

We start by presenting the high-order convergence of our Nystr\"om solvers in Table~\ref{table1} for the case of interior impedance boundary value problems with $Z^2=ik$. The loss of accuracy in the solvers based on the weighted formulations can be attributed to larger condition numbers of the matrices associated with the discretization of the operators $\mathcal{A}_{k,\kappa}^{\mathbf{x},2,1}~\eqref{eq:CFIER11paramw}$. In Table~\ref{table2} we present the high-order convergence of our solvers in the case of (exterior) scattering problems with impedance $Z^1=ik$.

 \begin{table}[!htbp] \scriptsize\centering
\begin{tabular}{|c|c|c|c|c|c|c|c|c|c|}
\hline\multicolumn{10}{|c|}{\text{ Interior Helmholtz problem with impedance boundary condition} $Z^2={ik}$} \\\hline\hline
  \text{wavenumber}&  \text{unknowns} & \multicolumn{4}{|c|}{ \text{Square }} & \multicolumn{4}{|c|}{ \text{L-shaped}} \\\hline
  \multirow{2}{*}{$k$} & \multirow{2}{*}{$2n$} &  \multicolumn{2}{|c|}{
$\mathcal{A}_{k,\kappa}^{\mathbf{x},2}~\eqref{eq:CFIER1param}$} &\multicolumn{2}{|c|}{$\mathcal{A}_{k,\kappa}^{\mathbf{x},2,1}~\eqref{eq:CFIER11paramw}$} &\multicolumn{2}{|c|}{$\mathcal{A}_{k,\kappa}^{\mathbf{x},2}~\eqref{eq:CFIER1param}$} &\multicolumn{2}{|c|}{$\mathcal{A}_{k,\kappa}^{\mathbf{x},2,1}~\eqref{eq:CFIER11paramw}$}\\
				&				  &  \text{Iter}& $\epsilon_\Gamma$  & \text{Iter}& $\epsilon_\Gamma^w$ & \text{Iter}& $\epsilon_\Gamma$  & \text{Iter}& $\epsilon_\Gamma^w$\\
\hline \multirow{5}{*}{2}& 32  & 17 & $3.0. \times 10^{-3}$ &18 & $4.8\times 10^{-2}$& 19 &  $5.4\times 10^{-3}$ & 19 & $3.4\times 10^{-2}$ \\
				   &64  & 24  & $6.0 \times 10^{-4}$ & 30 & $1.7 \times 10^{-2}$ & 26 &  $1.6 \times 10^{-3}$ & 28 & $3.1\times 10^{-2}$\\
				   &128  & 25  & $1.0\times 10^{-4}$ & 32  & $7.6\times 10^{-3}$& 25 &  $2.8\times 10^{-4}$ & 30 & $1.9 \times 10^{-2}$ \\
				   & 256 & 25  & $1.7\times 10^{-5}$ & 30 &  $2.0\times 10^{-3}$ & 25 &  $4.7 \times 10^{-5}$ &  30 & $5.9\times 10^{-3}$ \\
				   & 512 & 25  & $2.6\times 10^{-6}$ & 30 & $4.7\times 10^{-4}$&  25 &  $7.3\times 10^{-6}$ & 31 & $1.5\times 10^{-3}$\\
                                   & 1024 & 25  & $3.8\times 10^{-7}$ & 29 & $6.8\times 10^{-5}$ & 25 &  $1.0\times 10^{-6}$ & 31 & $3.5\times 10^{-4}$ \\
\hline\end{tabular}
\caption{High-order convergence of our solvers for the interior impedance boundary value problem using CFIER fromulations with impedance $Z^2=ik$. We present results for the square and the L-shaped scatterers and we consider both non-weighted and weighted version of CFIER. The GMRES tolerance was taken to be $10^{-12}$.\label{table1}}
\end{table}

 \begin{table}[!htbp] \scriptsize\centering
\begin{tabular}{|c|c|c|c|c|c|c|c|c|c|}
\hline\multicolumn{10}{|c|}{\text{Exterior scattering problem with impedance boundary condition} $Z^1={ik}$} \\\hline\hline
  \text{wavenumber}&  \text{unknowns} & \multicolumn{4}{|c|}{ \text{Square }} & \multicolumn{4}{|c|}{ \text{L-shaped}} \\\hline
  \multirow{2}{*}{$k$} & \multirow{2}{*}{$2n$} &  \multicolumn{2}{|c|}{$\mathcal{A}_{k,\kappa}^{\mathbf{x},1}~\eqref{eq:CFIER1param}$} &\multicolumn{2}{|c|}{$\mathcal{A}_{k,\kappa}^{\mathbf{x},1,1}~\eqref{eq:CFIER11paramw}$} &\multicolumn{2}{|c|}{$\mathcal{A}_{k,\kappa}^{\mathbf{x},1}~\eqref{eq:CFIER1param}$} &\multicolumn{2}{|c|}{$\mathcal{A}_{k,\kappa}^{\mathbf{x},1,1}~\eqref{eq:CFIER11paramw}$}\\
				&				  &  \text{Iter}& $\epsilon_\infty$  & \text{Iter}& $\epsilon_\infty$ & \text{Iter}& $\epsilon_\infty$  & \text{Iter}& $\epsilon_\infty$\\
\hline \multirow{5}{*}{2}& 32  & 17 & $4.0. \times 10^{-2}$ &17 & $5.1\times 10^{-2}$& 29 &  $8.0\times 10^{-2}$ & 32 & $8.7\times 10^{-2}$ \\
				   &64  & 21  & $2.5 \times 10^{-3}$ & 23 & $2.6 \times 10^{-3}$ & 29 &  $2.0 \times 10^{-3}$ & 34 & $4.4\times 10^{-3}$\\
				   &128  & 22  & $8.6\times 10^{-5}$ & 21  & $3.0\times 10^{-4}$& 29 &  $1.0\times 10^{-4}$ & 32 & $3.9 \times 10^{-4}$ \\
				   & 256 & 22  & $9.2\times 10^{-6}$ & 21 &  $4.8\times 10^{-5}$ & 29 &  $1.1 \times 10^{-5}$ &  32 & $8.4\times 10^{-5}$ \\
				   & 512 & 21  & $1.1\times 10^{-6}$ & 21 & $7.7\times 10^{-6}$&  28 &  $1.3\times 10^{-6}$ & 29 & $1.7\times 10^{-5}$\\
                                   & 1024 & 21  & $3.1\times 10^{-7}$ & 19 & $1.2\times 10^{-6}$ & 28 &  $1.2\times 10^{-7}$ & 27 & $3.8\times 10^{-6}$ \\
\hline\end{tabular}
\caption{High-order convergence for the exterior scattering problems with impedance $Z^1=ik$ using CFIER formulations. We present Square and L-shaped scatterer and consider both non-weighted and weighted version of CFIER. The GMRES tolerance was taken to be $10^{-12}$.\label{table2}}
 \end{table}

 We present in Table~\ref{table3} the performance of solvers in the high-frequency regime of scattering problems with impedance $Z^1=ik$. Remarkably, the numbers of iterations required to reach a GMRES residual of $10^{-4}$ are small and vary very mildly with increased frequencies. This is also the case for interior impedance boundary problems with $Z^2=-ik$. However, in the case of interior impedance boundary problems with $Z^2=ik$ the situation is quite different as the numbers of iterations grow considerably with the frequency.
 \begin{table}[!htbp] \scriptsize\centering
\begin{tabular}{|c|c|c|c|c|c|}
\hline\multicolumn{6}{|c|}{\text{Exterior scattering problem with impedance boundary condition} $Z^1={ik}$} \\\hline\hline
  \text{wavenumber}&  \text{unknowns} & \multicolumn{2}{|c|}{ \text{Square }} & \multicolumn{2}{|c|}{ \text{L-shaped}} \\\hline
  \multirow{2}{*}{$k$} & \multirow{2}{*}{$2n$} &  \multicolumn{2}{|c|}{$\mathcal{A}_{k,\kappa}^{\mathbf{x},1}~\eqref{eq:CFIER1param}$} &\multicolumn{2}{|c|}{$\mathcal{A}_{k,\kappa}^{\mathbf{x},1}~\eqref{eq:CFIER1param}$} \\
				&				  &  \text{Iter}& $\epsilon_\infty$  & \text{Iter}& $\epsilon_\infty$ \\
  \hline 8 & 192  & 16 & $1.1 \times 10^{-4}$ & 19 &  $1.4\times 10^{-4}$  \\
  \hline 16 & 384  & 17 & $9.3 \times 10^{-5}$ & 19 &  $7.6\times 10^{-5}$ \\
  \hline 32 & 768  & 20 & $1.4 \times 10^{-4}$ &  21 & $1.1\times 10^{-4}$ \\
  \hline 64 & 1536  & 19 & $8.9 \times 10^{-5}$ &  21 & $7.5\times 10^{-5}$ \\
  \hline 128 & 3072  & 22 & $1.2 \times 10^{-4}$ &  24 & $1.1\times 10^{-4}$ \\
\hline\end{tabular}
\caption{Accuracy and numbers of iterations for the solution of exterior scattering problem with impedance $Z^1=ik$ using CFIER formulations. We present Square and L-shaped scatterer and consider both non-weighted and weighted version of CFIER. The GMRES tolerance was taken to be $10^{-4}$. In the case of interior impedance boundary value problems with impedance $Z^2=ik$, the numbers of GMRES iterations needed to reach the same GMRES tolerance are $30, 50, 98, 194, 451$ (square) and $29, 50, 99, 214, 477$ (L-shape) and respectively $12, 14, 16, 19, 22$ (square) and $13, 15, 17, 20, 24$ (L-shape) in the case $Z^2=-ik$ for the same wavenumbers and discretization size leading to comparable levels of accuracy.\label{table3}}
 \end{table}

 In Table~\ref{table4} we present the high-order accuracy of our solvers in the case of interior transmission impedance boundary value problems with $Z^2=-2N_{k+i}$. We continue in Table~\ref{table5} with the high-frequency behavior of our solvers for transmission impedance boundary value problems with $Z^1=2N_{k+i}$. Again, the solvers for exterior problems require very small numbers of iterations for convergence.  

 \begin{table}[!htbp] \scriptsize\centering
\begin{tabular}{|c|c|c|c|c|c|}
\hline\multicolumn{6}{|c|}{\text{Interior problem with impedance boundary condition} $Z^2={-2N_{k+i}}$} \\\hline\hline
  \text{wavenumber}&  \text{unknowns} & \multicolumn{2}{|c|}{ \text{Square }} & \multicolumn{2}{|c|}{ \text{L-shaped}} \\\hline
  \multirow{2}{*}{$k$} & \multirow{2}{*}{$2n$} &  \multicolumn{2}{|c|}{$\mathcal{A}_{k,\kappa}^{\mathbf{x},2}~\eqref{eq:CFIER1param}$} &\multicolumn{2}{|c|}{$\mathcal{A}_{k,\kappa}^{\mathbf{x},2}~\eqref{eq:CFIER1param}$} \\
				&				  &  \text{Iter}& $\epsilon_\Gamma$  & \text{Iter}& $\epsilon_\Gamma$ \\
  \hline \multirow{6}{*}{2} & 32  & 14 & $2.6 \times 10^{-3}$ & 15 &  $5.5\times 10^{-3}$  \\
  & 64  & 14 & $3.0 \times 10^{-4}$ & 15 &  $1.0\times 10^{-3}$ \\
  & 128  & 14 & $5.1 \times 10^{-5}$ &  14 & $1.6\times 10^{-4}$ \\
  & 256  & 14 & $7.8 \times 10^{-6}$ &  14 & $2.5\times 10^{-5}$ \\
  & 512  & 14 & $1.1 \times 10^{-6}$ &  14 & $3.8\times 10^{-6}$ \\
  & 1024  & 14 & $1.6 \times 10^{-7}$ &  14 & $5.6\times 10^{-7}$ \\
\hline\end{tabular}
\caption{High-order convergence of our solvers for the interior Transmission Impedance boundary value problems with $Z^2={-2N_{k+i}}$ using CFIER formulations. We present Square and L-shaped scatterer and considered a GMRES residual equal to $10^{-12}$.\label{table4}}
 \end{table}

 \begin{table}[!htbp] \scriptsize\centering
\begin{tabular}{|c|c|c|c|c|c|}
\hline\multicolumn{6}{|c|}{\text{Exterior scattering problem with impedance boundary condition} $Z^1=2N_{k+i}$} \\\hline\hline
  \text{wavenumber}&  \text{unknowns} & \multicolumn{2}{|c|}{ \text{Square }} & \multicolumn{2}{|c|}{ \text{L-shaped}} \\\hline
  \multirow{2}{*}{$k$} & \multirow{2}{*}{$2n$} &  \multicolumn{2}{|c|}{$\mathcal{A}_{k,\kappa}^{\mathbf{x},1}~\eqref{eq:CFIER1param}$} &\multicolumn{2}{|c|}{$\mathcal{A}_{k,\kappa}^{\mathbf{x},1}~\eqref{eq:CFIER1param}$} \\
				&				  &  \text{Iter}& $\epsilon_\infty$  & \text{Iter}& $\epsilon_\infty$ \\
  \hline 8 & 192  & 8 & $6.1 \times 10^{-4}$ & 9 &  $5.8\times 10^{-4}$  \\
  \hline 16 & 384  & 8 & $2.8 \times 10^{-4}$ & 9 &  $4.0\times 10^{-4}$ \\
  \hline 32 & 768  & 8 & $2.6 \times 10^{-4}$ &  9 & $3.8\times 10^{-4}$ \\
  \hline 64 & 1536  & 6 & $2.9 \times 10^{-4}$ &  9 & $4.7\times 10^{-4}$ \\
  \hline 128 & 3072  & 6 & $2.8 \times 10^{-4}$ &  9 & $4.1\times 10^{-4}$ \\
\hline\end{tabular}
\caption{Accuracy and numbers of iterations for the solution of exterior scattering problem with transmission impedance operator $Z^1=2N_{k+i}$ using CFIER formulations. We present Square and L-shaped scatterer and consider both non-weighted and weighted version of CFIER. The GMRES tolerance was taken to be $10^{-4}$. In the case of interior impedance boundary value problems with impedance operators $Z^2=-2N_{k+i}$, the numbers of GMRES iterations needed to reach the same GMRES tolerance are $7, 7, 7, 7, 7$ (square) and respectively $8, 7, 8, 8, 8$ (L-shape) for the same wavenumbers and discretization size leading to comparable levels of accuracy.\label{table5}}
 \end{table}

We continue in Table~\ref{table6} with  scattering experiments for the physically important case of piecewise constant impedance. In this case, given that the impedance data $f^1$ is discontinuous, we employ the weighted version of CFIER formulation to obtain numerical solutions. Finally, we present in Table~\ref{table7} results for the case of interior problems with blended transmission impedance operators $Z^2_b$ defined in equations~\eqref{gen_impedance_4_domains}. Given that the main motivation for these problems comes from DDM, we focus on the case of square subdomains. As it can be seen from the results in Table~\ref{table7}, the efficiency of the CFIER formulations deteriorates with the growth of the size of the central subdomain $D_1$ in Figure~\ref{fig:domain}. A possible remedy to this situation is to further subdivide the subdomain $D_1$ into smaller subdomains.   
 \begin{table}[!htbp] \scriptsize\centering
\begin{tabular}{|c|c|c|c|c|c|}
\hline\multicolumn{6}{|c|}{\text{Exterior scattering problem with piecewise constant impedance boundary condition}} \\\hline\hline
  \text{wavenumber}&  \text{unknowns} & \multicolumn{2}{|c|}{ \text{Square }} & \multicolumn{2}{|c|}{ \text{L-shaped}} \\\hline
  \multirow{2}{*}{$k$} & \multirow{2}{*}{$2n$} &  \multicolumn{2}{|c|}{$\mathcal{A}_{k,\kappa}^{\mathbf{x},1,1}~\eqref{eq:CFIER11paramw}$} &\multicolumn{2}{|c|}{$\mathcal{A}_{k,\kappa}^{\mathbf{x},1,1}~\eqref{eq:CFIER11paramw}$} \\
				&				  &  \text{Iter}& $\epsilon_\infty$  & \text{Iter}& $\epsilon_\infty$ \\
  \hline 8 & 192  & 22 & $2.4 \times 10^{-4}$ & 23 &  $3.0\times 10^{-4}$  \\
  \hline 16 & 384  & 26 & $1.3 \times 10^{-4}$ & 27 &  $1.2\times 10^{-4}$ \\
  \hline 32 & 768  & 30 & $1.6 \times 10^{-4}$ & 32 & $1.3\times 10^{-4}$ \\
  \hline 64 & 1536  & 35 & $2.1 \times 10^{-4}$ & 37  & $1.6\times 10^{-4}$ \\
  \hline 128 & 3072  & 42 & $1.5 \times 10^{-4}$ & 42  & $2.1\times 10^{-4}$ \\
\hline\end{tabular}
\caption{Results for the exterior scattering problem using weighted CFIER formulations in the case of piecewise constant impedance boundary condition with impedance operator $Z^1=i\alpha_j k$. The coefficients $\alpha_j$ were chosen so that $\alpha_j = j-1$ along the $j$-th side of the scatterer. We present Square and L-shaped scatterer. The GMRES residual was taken to be equal to $10^{-4}$.\label{table6}}
 \end{table}

 \begin{table}[!htbp] \scriptsize\centering
\begin{tabular}{|c|c|c|c|}
\hline\multicolumn{4}{|c|}{\text{Interior problem with blended transmission mpedance boundary conditions $Z^2_b$~\eqref{gen_impedance_4_domains}}} \\\hline\hline
  \text{wavenumber}&  \text{unknowns} & \multicolumn{2}{|c|}{ \text{Square }}  \\\hline
  \multirow{2}{*}{$k$} & \multirow{2}{*}{$2n$} &  \multicolumn{2}{|c|}{$\mathcal{A}_{k,\kappa}^{\mathbf{x},2}~\eqref{eq:CFIER1param}$}  \\
				&				  &  \text{Iter}& $\epsilon_\Gamma$  \\
  \hline 4 & 64  & 15 & $2.5 \times 10^{-4}$  \\
  \hline 8 & 128  & 29 & $4.3 \times 10^{-4}$  \\
  \hline 16 & 256  & 72 & $6.0 \times 10^{-4}$ \\
  \hline 32 & 512  & 107 & $3.0 \times 10^{-4}$ \\
\hline\end{tabular}
\caption{Results for the interior problem with blended transmission impedance boundary conditions $Z^2_b$~\eqref{gen_impedance_4_domains}. The complexified wavenumbers in the adjacent domains that enter the definition of the operator $Z^2_b$ were taken to be equal to $1+i, 2+i, 3+i, 4+i$. The GMRES residual was taken to be equal to $10^{-4}$.\label{table7}}
 \end{table}

\section{Conclusions}

In this work we have presented high-order Nytr\"om discretizations based on polynomially graded meshes for regularized boundary integral formulations for Helmholtz impedance boundary value problems in domains with corners. We have rigorously proven  the well-posedness of the regularized formulations and we have shown that the Nyst\"om discretizations of these formulations lead to efficient and very accurate solvers of impedance boundary value problems. The numerical analysis of these schemes will be subject of future investigation. 

\section*{Acknowledgments}
Catalin Turc gratefully acknowledge support from NSF through contract DMS-1312169. Yassine Boubendir gratefully acknowledge support from NSFthrough contract DMS-1319720.

\bibliography{biblio}

\begin{thebibliography}{10}

\bibitem{adams:2003}
R.A. Adams and J.J.F. Fournier.
\newblock {\em Sobolev spaces}, volume 140 of {\em Pure and Applied Mathematics
  (Amsterdam)}.
\newblock Elsevier/Academic Press, Amsterdam, second edition, 2003.

\bibitem{turc_corner_N}
A.~Anand, J.~S. Ovall, and C.~Turc.
\newblock Well-conditioned boundary integral equations for two-dimensional
  sound-hard scattering problems in domains with corners.
\newblock {\em J. Integral Equations Appl.}, 24(3):321--358, 2012.

\bibitem{Bernard}
J.~M.~L. Bernard.
\newblock A spectral approach for scattering by impedance polygons.
\newblock {\em Quart. J. Mech. Appl. Math.}, 59(4):517--550, 2006.

\bibitem{BorelLevadouxAlouges}
S.~Borel, D.P. Levadoux, and F.~Alouges.
\newblock A new well-conditioned integral formulation for {M}axwell equations
  in three dimensions.
\newblock {\em IEEE Trans. Antennas and Propagation}, 53(9):2995--3004, 2005.

\bibitem{boubendirDDM}
Y.~Boubendir, X.~Antoine, and C.~Geuzaine.
\newblock A quasi-optimal non-overlapping domain decomposition algorithm for
  the {H}elmholtz equation.
\newblock {\em J. Comput. Phys.}, 231(2):262--280, 2012.

\bibitem{turc2}
Y.~Boubendir, O.~Bruno, C.~Levadoux, and C.~Turc.
\newblock Integral equations requiring small numbers of {K}rylov-subspace
  iterations for two-dimensional smooth penetrable scattering problems.
\newblock {\em Appl. Numer. Math.}, 95:82--98, 2015.

\bibitem{turc1}
Y.~Boubendir and C.~Turc.
\newblock Wave-number estimates for regularized combined field boundary
  integral operators in acoustic scattering problems with neumann boundary
  conditions.
\newblock {\em IMA Journal of Numerical Analysis}, 33(4):1176--1225, 2013.

\bibitem{br-turc}
O.P. Bruno, T.~Elling, and C.~Turc.
\newblock Regularized integral equations and fast high-order solvers for
  sound-hard acoustic scattering problems.
\newblock {\em Internat. J. Numer. Methods Engrg.}, 91(10):1045--1072, 2012.

\bibitem{Langdon}
S.~N. Chandler-Wilde, S.~Langdon, and M.~Mokgolele.
\newblock A high frequency boundary element method for scattering by convex
  polygons with impedance boundary conditions.
\newblock {\em Commun. Comput. Phys.}, 11(2):573--593, 2012.

\bibitem{KressColton}
D.~Colton and R.~Kress.
\newblock {\em Integral equation methods in scattering theory}.
\newblock Pure and Applied Mathematics (New York). John Wiley \& Sons Inc., New
  York, 1983.
\newblock A Wiley-Interscience Publication.

\bibitem{Depres}
Bruno Despr{\'e}s.
\newblock D\'ecomposition de domaine et probl\`eme de {H}elmholtz.
\newblock {\em C. R. Acad. Sci. Paris S\'er. I Math.}, 311(6):313--316, 1990.

\bibitem{dominguez2015well}
Victor Dominguez, Mark Lyon, and Catalin Turc.
\newblock Well-posed boundary integral equation formulations and
  nystr$\backslash$" om discretizations for the solution of helmholtz
  transmission problems in two-dimensional lipschitz domains.
\newblock {\em arXiv preprint arXiv:1509.04415}, 2015.

\bibitem{EsFaVer:1992}
L.~Escauriaza, E.~B. Fabes, and G.~Verchota.
\newblock On a regularity theorem for weak solutions to transmission problems
  with internal {L}ipschitz boundaries.
\newblock {\em Proc. Amer. Math. Soc.}, 115(4):1069--1076, 1992.

\bibitem{Gander1}
Martin~J. Gander, Fr{\'e}d{\'e}ric Magoul{\`e}s, and Fr{\'e}d{\'e}ric Nataf.
\newblock Optimized {S}chwarz methods without overlap for the {H}elmholtz
  equation.
\newblock {\em SIAM J. Sci. Comput.}, 24(1):38--60 (electronic), 2002.

\bibitem{KressCorner}
R.~Kress.
\newblock A {N}ystr\"om method for boundary integral equations in domains with
  corners.
\newblock {\em Numer. Math.}, 58(2):145--161, 1990.

\bibitem{KressH}
R.~Kress.
\newblock On the numerical solution of a hypersingular integral equation in
  scattering theory.
\newblock {\em J. Comput. Appl. Math.}, 61(3):345--360, 1995.

\bibitem{Kress}
R.~Kress.
\newblock {\em Linear integral equations}, volume~82 of {\em Applied
  Mathematical Sciences}.
\newblock Springer-Verlag, New York, second edition, 1999.

\bibitem{kusmaul}
R.~Kussmaul.
\newblock Ein numerisches {V}erfahren zur {L}\"osung des {N}eumannschen
  {N}eumannschen {A}ussenraumproblems f\"ur die {H}elmholtzsche
  {S}chwingungsgleichung.
\newblock {\em Computing (Arch. Elektron. Rechnen)}, 4:246--273, 1969.

\bibitem{Levadoux}
D.~Levadoux.
\newblock {\em Etude d'une {\'e}quation int{\'e}grale adapt{\'e}e {\`a} la
  r{\'e}solution hautes fr{\'e}quences de l'{\'e}quation d'Helmholtz}.
\newblock PhD thesis, Universit\'e de Paris VI France, 2001.

\bibitem{martensen}
{E.} Martensen.
\newblock \"{U}ber eine {M}ethode zum r\"aumlichen {N}eumannschen {P}roblem mit
  einer {A}nwendung f\"ur torusartige {B}erandungen.
\newblock {\em Acta Math.}, 109:75--135, 1963.

\bibitem{mclean:2000}
W.~McLean.
\newblock {\em Strongly elliptic systems and boundary integral equations}.
\newblock Cambridge University Press, Cambridge, 2000.

\bibitem{Nataf}
Fr{\'e}d{\'e}ric Nataf.
\newblock Interface connections in domain decomposition methods.
\newblock In {\em Modern methods in scientific computing and applications
  ({M}ontr\'eal, {QC}, 2001)}, volume~75 of {\em NATO Sci. Ser. II Math. Phys.
  Chem.}, pages 323--364. Kluwer Acad. Publ., Dordrecht, 2002.

\bibitem{Perrey-Debain}
Emmanuel Perrey-Debain, Jon Trevelyan, and Peter Bettess.
\newblock On wave boundary elements for radiation and scattering problems with
  piecewise constant impedance.
\newblock {\em IEEE Trans. Antennas and Propagation}, 53(2):876--879, 2005.

\bibitem{SaadSchultz}
Y.~Saad and M.H. Schultz.
\newblock G{MRES}: a generalized minimal residual algorithm for solving
  nonsymmetric linear systems.
\newblock {\em SIAM J. Sci. Statist. Comput.}, 7(3):856--869, 1986.

\bibitem{Steinbach}
O.~Steinbach and M.~Windisch.
\newblock Stable boundary element domain decomposition methods for the
  {H}elmholtz equation.
\newblock {\em Numer. Math.}, 118(1):171--195, 2011.

\end{thebibliography}

\end{document}